\journal{??}
\newcommand{\e}{\epsilon}
\newtheorem{definition}{Definition}[section]
\newtheorem{theorem}{Theorem}[section]
\newtheorem{lemma}{Lemma}[section]
\newtheorem{corollary}{Corollary}[section]
\numberwithin{equation}{section}
\begin{document}
\begin{frontmatter}
\title{Local, 2-local derivations and biderivations on 3-parameter generalized quaternion }

\author[label1]{Hassan Oubba}
\address[label1]{Universit\'e Moulay Isma\"{i}l\\
		Facult\'e des sciences -  D\'epartement de Mth\'ematiques\\
		B.P. 11201, Zitoune, M\'ekn\`es 50000, Maroc\\e-mail:{ hassan.oubba@edu.umi.ac.ma}}

\begin{abstract}
This article investigates the recently introduced three-parameter generalized quaternion algebra (3PGQ), denoted here as $\mathbb{K}_{\lambda_1,\lambda_2,\lambda_3}$
 . Our analysis is structured in three parts. First, we demonstrate that every local and 2-local derivation on this algebra is automatically a derivation. Second, we provide a complete characterization of its biderivations. Finally, we describe its commuting maps and centroid.
\end{abstract}
\begin{keyword}
Derivations, Local derivations, 2-local derivations, Biderivations, commuting maps, 3-parameter generalized quaternion algebra.\\
 {\it{ 2020 Mathematics Subject Classification:} 11R52 · 15A99 · 11B39 · 11B37} 
\end{keyword}
\end{frontmatter}

\section*{Introduction} \label{Introduction}
 The study of quaternions, initiated by Sir William Rowan Hamilton (1805-1865), provides a critical generalization of complex numbers. Quaternion algebra occupies a central position in modern mathematics, intersecting with diverse fields such as non-commutative ring theory, Lie theory, geometry, and number theory. Motivated by these connections, this work seeks to enrich the repertoire of the quaternion algebra $\mathbb{K}_{\lambda_1,\lambda_2,\lambda_3}$ by establishing new properties. The exploration of local derivations and biderivations is a well-established line of inquiry for algebraic structures like Lie algebras and rings, as these maps offer powerful algebraic indicators for probing their underlying structures.
 
Let $\mathcal{A}$ be an algebra (not necessary associative). Recall that a linear map $D :\mathcal{A} \rightarrow \mathcal{A}$ is said to be a derivation, if $D(ab)=D(a)b+aD(b)$ for all $a,b \in \mathcal{A}$. A linear map $\Delta : \mathcal{A} \rightarrow \mathcal{A}$ is called local derivation if for every $x \in \mathcal{A}$ there exists a derivation $D$ of $\mathcal{A}$ such that $\delta(x)=D_x(x)$. A 2-local derivation is a map (not necessarily linear) $\Delta : \mathcal{A} \rightarrow \mathcal{A}$ such that, for all $x,y \in \mathcal{A}$ there exists a derivation $D_{x,y}$ of $\mathcal{A}$ satisfies $\Delta(x)=D_{x,y}(x)$ and $\Delta(y)=D_{x,y}(y)$. The study of local and 2-local derivations of non-associative algebras was initiated in some papers of Ayuov and Kudaybergenov (for the case of Lie algebras, see \cite{Ayo1, Ayo2}. In particular, they proved that there are no non-trivial local and 2-local derivations on semisimple finite-dimensional Lie Algebras. In \cite{Ayo4} examples of 2-local derivations on nilpotent Lie algebras which are not derivation, were also given. Later, the study of local and 2-local derivations was continued for Leibniz algebras \cite{Ayo3}.\\
 A bilinear map $ \phi : \mathcal{A} \times \mathcal{A} \rightarrow \mathcal{A}$ is said to be biderivation, if $\phi(ab,c)=\phi(a,c)b+a\phi(b,c)$ and $ \phi(a,bc)=\phi(a,b)c+b\phi(a,c)$ for all $a,b,c \in \mathcal{A}$.  That means for all $a ,b\in \mathcal{A}$ the two linear maps $\phi(a,.): \mathcal{A} \rightarrow \mathcal{A}$, $c \rightarrow \phi(a,c)$ and $\phi(.,b): \mathcal{A} \rightarrow \mathcal{A}$, $c \rightarrow \phi(c,b)$, are derivations of $\mathcal{A}$. \\
Commuting maps and biderivations arose first in the associative ring theory \cite{Bres1, BresMar2}. Then, many authors have made considerable efforts to make the study of these maps very successful, see for example \cite{bn, bbo, Chen, oubs, Gua, Han, Leger, oubm, oubw, oubd, Tang, Wang1, Wang2}. To study biderivations and commuting linear maps of Schr\"odininger-Virasoro Lie algebra, in \cite{Wang1}, the authors use the $\mathbb{Z}$-graduation of this algebra. Furthermore, in \cite{BresZhao} the authors give a general method to characterize biderivations and commuting linear maps for a large class of Lie algebras, the results obtained show under certain conditions the crucial relationship between biderivations and commuting linear maps of a Lie algebra $\mathfrak{g}$ and the elements of $cent(M)$, with $cent(M)$ denoted the centroid of a $\mathfrak{g}$-module. The way used in \cite{Tang} requires the use of root systems of the simple Lie algebra. In this paper, using a classification  theorem in \cite{cha} and a computational method, we prove that any local and 2-local derivations of $\mathbb{K}_{\lambda_1,\lambda_2,\lambda_3}$ ($\lambda_3\neq0$) are derivation(Theorem \ref{thm2} ,\ref{thm3}). Also we characterize all biderivations, commuting linear maps and centroid of generalized quaternion algebras. Our first main result (Theorem \ref{thm2}, \ref{thm3}) is to prove that, every local (2-local) derivation of a 3-parameter generalized quaternion is a derivation. In section 2 we state our second main result as the following: Let $\mathbb{K}_{\lambda_1,\lambda_2, \lambda_3\neq0}$, be a 3-parameter generalized quaternion over $\mathbb{R}$. Then, $\delta$ is a  biderivation of $\mathbb{K}_{\lambda_1,\lambda_2, \lambda_3}$ if and only if there is a real number $\mu$ such that
$$\delta(x,y)=\mu \begin{vmatrix}
		\frac{\lambda_3}{\lambda_1} e_1&\frac{\lambda_2}{\lambda_1} e_2&e_3\\
		x_1&x_2&x_3 \\
		y_1&y_2&y_3
		\end{vmatrix}, \hspace{0.2 cm} x,y \in \mathbb{K}_{\lambda_1,\lambda_2, \lambda_3 }.$$
Our second third result (Theorem \ref{thm6}) is to characterize the form of  commuting linear maps of 3-parameter generalized quaternion. The last main result to (Theorem \ref{thm8}) prove that the centroid of a 3-parameter generalized quaternion is a field.\\
 This paper is organized as follows. In the first section we recall the necessary information concerning 3-parameter generalized quaternion. In Section 2 we prove that any local and 2-local derivation on 3-parameter generalized quaternion is a derivation. Section 3 is dedicated to characterize the form of all biderivations on a 3-parameter generalized quaternion. The last section aims to characterize the form of commuting linear maps and the centroid of a 3-parameter generalized quaternion. 
\section{Preliminaries}
In this paper, we consider  3-parameter generalized quaternion $\mathbb{K}_{\lambda_1 ,\lambda_2, \lambda_3}$ over $\mathbb{R}$ with assumption $\lambda_3 \neq 0$.
Quaternions were invented by Sir William Rowan Hamiltonian as an extension to the complex numbers. Hamiltonian’s defining relation is most succinctly written as:
  $$i^2=j^2=k^2=-1, \hspace{0.3 cm}ijk=-1.$$
  Split quaternions are given by 
  $$i^2=-1, \hspace{0.3 cm}j^2=k^2=1, \hspace{0.3 cm}ijk=1.$$
 Generalized quaternions are  defined as
 $$i^2=-\alpha, \hspace{0.3 cm} j^2=-\beta, \hspace{0.3 cm}k^2=-\alpha \beta, \hspace{0.3 cm}ijk=-\alpha \beta.$$
On the other hand, T. D. \c{S}ent\"urk and Z. \"Unal have introduced a new type
of quaternion family, which is called the 3-parameter generalized quaternion
(shortly, 3PGQ) in~\cite{sen}. In order to achieve a generalization of real, split,
and 2PGQ, the authors derive a comprehensive understanding of the quaternion
algebra based on the 3-parameters. The set of 3PGQs is denoted by $\mathbb{K}_{\lambda_1,\lambda_2,\lambda_3}$ and
defined as
\[
\mathbb{K}_{\lambda_1,\lambda_2,\lambda_3} = \lbrace a + b e_1 + c e_2 + d e_3 \mid a,b,c,d,\lambda_1, \lambda_2, \lambda_3 \in \mathbb{R},\ 
e_1^2 = -\lambda_1\lambda_2,\ e_2^2 = -\lambda_1\lambda_3,\ e_3^2 = -\lambda_2\lambda_3,\ e_1 e_2 e_3 = -\lambda_1\lambda_2\lambda_3 \rbrace.
\]
Each element $x= x_0 + x_1 e_1 + x_2 e_2 + x_3 e_3$ of the set $k_{\lambda_1,\lambda_2,\lambda_3}$ is called a \emph{3-parameters generalized quaternion (3PGQ)}. The real numbers $x_0, x_1, x_2, x_3$ are called components of $x$. Note that $\mathcal{B}(k_{\lambda_1,\lambda_2,\lambda_3}) = \{e_0, e_1, e_2, e_3\}$ is the base of the $\mathbb{K}_{\lambda_1,\lambda_2,\lambda_3}$, its vectors verify the following multiplication table:

\[
\begin{array}{c|cccc}
 & e_0 & e_1 & e_2 & e_3 \\
\hline
e_0 & 1 & e_1 & e_2 & e_3 \\
e_1 & e_1 & -\lambda_1\lambda_2 & \lambda_1 e_3 & -\lambda_2 e_2 \\
e_2 & e_2 & -\lambda_1 e_3 & -\lambda_1\lambda_3 & \lambda_3 e_1 \\
e_3 & e_3 & \lambda_2 e_2 & -\lambda_3 e_1 & -\lambda_2\lambda_3 \\
\end{array}
\]

\noindent \textbf{Special cases:}
\begin{enumerate}
    \item[(i)] If $\lambda_1=1$, $\lambda_2=\alpha$, $\lambda_3=\beta$, then the algebra of $2PGQs$ is obtained.
    \item[(ii)] If $\lambda_1 = 1$, $\lambda_2 = 1$, $\lambda_3 = 1$, then the algebra of Hamilton quaternions is achieved.
    \item[(iii)] If $\lambda_1 = 1$, $\lambda_2 = 1$, $\lambda_3 = -1$, then gives us the algebra of split quaternions.
    \item[(iv)] If $\lambda_1 = 1$, $\lambda_2 = 1$, $\lambda_3 = 0$, then the algebra of semi-quaternions is attained.
    \item[(v)] If $\lambda_1 = 1$, $\lambda_2 = -1$, $\lambda_3 = 0$, then we get the algebra of split semi-quaternions.
\end{enumerate}

Any 3PGQ, $x = x_0 + x_1 e_1 + x_2 e_2 + x_3 e_3$ consists of two parts, the scalar part and the vector part: $x = x_0 + \tilde{x}$ such as:
\[
 \tilde{x} = x_1 e_1 + x_2 e_2 + x_3 e_3.
\]

The rules of addition, scalar multiplication and multiplication are defined on $\mathbb{K}_{\lambda_1,\lambda_2,\lambda_3}$ as follows:

Let $x = x_0 + x_1 e_1 + x_2 e_2 + x_3 e_3$, $y = y_0 + y_1 e_1 + y_2 e_2 + y_3 e_3$ be 3PGQ, and $c$ be a real number.\\
 $\bullet$\text{Addition:} 
$$x + y = (x_0 + y_0) + (\tilde{x} + \tilde{y}) 
\quad = (x_0 + y_0) + (x_1 + y_1)e_1 + (x_2 + y_2)e_2 + (x_3 + y_3)e_3.$$
$\bullet$ \text{Multiplication by scalar:} 
$$c x= c x_0 + c x_1 e_1 + c x_2 e_2 + c x_3 e_3 \quad \text{for all } c \in \mathbb{R}.$$
$ \bullet$\text{Multiplication: from the multiplication table we have} 
 \begin{eqnarray*}
xy& =& (x_0 y_0 - \lambda_1 \lambda_2 x_1 y_1 - \lambda_1 \lambda_3 x_2 y_2 - \lambda_2 \lambda_3 x_3 y_3) \\
&& + e_1(x_0 y_1 + y_0 x_1 + \lambda_3 x_2 y_3 - \lambda_3 x_3 y_2) \\
&& + e_2(x_0 y_2 + y_0 x_2 + \lambda_2 x_3 y_1 - \lambda_2 x_1 y_3) \\
&& + e_3(x_0 y_3 + y_0 x_3 + \lambda_1 x_1 y_2 - \lambda_1 x_2 y_1). 
\end{eqnarray*}
Then, the multiplication can exprime as follows:
\[
xy = x_0 y_0 - f(\tilde{y}, \tilde{x}) + \tilde{y}\tilde{x} + \tilde{x}\tilde{y} + \tilde{y} \wedge \tilde{x},
\]
where
\[
f(\tilde{y}, \tilde{x}) = \lambda_1 \lambda_2 y_1 x_1 + \lambda_1 \lambda_3 y_2 x_2 + \lambda_2 \lambda_3 y_3 x_3
\]
and
\[
\tilde{y} \wedge \tilde{x} = 
\begin{vmatrix}
\lambda_3 \mathbf{e}_1 & \lambda_2 \mathbf{e}_2 & \lambda_1 \mathbf{e}_3 \\
y_1 & y_2 & y_3 \\
x_1 & x_2 & x_3
\end{vmatrix}.
\]
Here $\tilde{y} \wedge \tilde{x} = \lambda_3(y_2 x_3 - y_3 x_2)\mathbf{e}_1 + \lambda_2(y_3 x_1 - y_1 x_3)\mathbf{e}_2 + \lambda_1(y_1 x_2 - x_2 x_1)\mathbf{e}_3$.

\text{It's worth noting that } $e_0$ \text{ serves as an identity, which implies that } $e_0 \cdot e_i = e_i \cdot e_0 = e_i$ \text{ for every } $i$, 
\text{and so the center of } $k_{\lambda_1,\lambda_2,\lambda_3} \text{ is } Z(k_{\lambda_1,\lambda_2,\lambda_3}) = \mathbb{R} \cdot e_0 = \mathbb{R}$.

\section{Local and 2-Local Derivation of  3-parameter generalized quaternion}
We start by recall definition of derivation and related maps on arbitrary algebra.
\begin{definition}
\label{def2}
A derivation of an algebra $(\mathcal{A},.)$ is a linear map $d: \mathcal{A} \rightarrow \mathcal{A}$ such that 
$$d(x.y)=d(x).y+x.d(y) \hspace{1 cm}(1.1)$$
for all $x,y \in \mathcal{A}$.
\end{definition}
It is clear that the set of all derivations of an algebra $\mathcal{A}$ forms a vector space, which we denote by $Der(\mathcal{A})$. Recall that $Der(\mathcal{A})$ is a sub Lie algebra of $\mathfrak{gl}(\mathcal{A})$.
\begin{definition}
\label{def3}
Let $(\mathcal{A},.)$ be an algebra. An linear map $\Delta : \mathcal{A} \rightarrow \mathcal{A}$ is called local derivation, if for any element $x \in \mathcal{A}$ there exists a derivation $D_x : \mathcal{A} \rightarrow \mathcal{A}$ such that $\Delta(x)=D_x(x)$.
\end{definition}
\begin{definition}
\label{def4}
Let $(\mathcal{A},.)$ be an algebra. A (not necessarily linear) map $\Delta : \mathcal{A} \rightarrow \mathcal{A}$ is called 2-local derivation, if for any elements $x,y \in \mathcal{A}$ there exists a derivation
$D_{x,y} : \mathcal{A} \rightarrow \mathcal{A}$ such that $\Delta(x)=D_{x,y}(x)$ and $\Delta(y)=D_{x,y}(y)$.
\end{definition}
The following theorem recently has been proved by R. Chaker and A. Boua \cite{cha} and presents the form of any derivation of 3-parameter generalized quaternion.  
\begin{theorem} \cite{cha}
\label{thm1}
Let $d$ be a derivation of $k_{\lambda_1,\lambda_2,\lambda_3}$. Then, the matrix $D_d$ of $d$ is of the form:
\begin{equation} 
\label{eq1}
D_d= \begin{pmatrix}
		0 & 0 &0 &0\\
		0&0&\frac{-\lambda_3}{\lambda_2}a & -\frac{\lambda_3}{\lambda_1} b \\
		0 & a & d & -\frac{\lambda_2}{\lambda_1} c \\
		0 & b & c & d  
		\end{pmatrix}, 
\end{equation}
where $a,b,c,d\in \mathbb{R}, \lambda_1\lambda_2\neq0$ such that
\[
 \begin{cases}
d=d_{\lambda_1, \lambda_3}\neq0, & \text{if } \lambda_1 \lambda_3 = 0 \\
d=0, & \text{otherwise}
\end{cases}
\]
\end{theorem}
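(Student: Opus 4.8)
The plan is to pin down the matrix $D_d$ entirely by evaluating the derivation identity $d(xy)=d(x)y+xd(y)$ on pairs of basis vectors, reading all products off the multiplication table. First I would show that $d$ kills the scalar line: from $e_0^2=e_0$ the identity gives $d(e_0)=d(e_0)e_0+e_0d(e_0)=2d(e_0)$, hence $d(e_0)=0$, so the first column of $D_d$ vanishes. Writing $d(e_i)=\sum_{j=0}^{3}D_{ji}e_j$ for $i=1,2,3$, I would feed the three ``square'' relations $e_1^2=-\lambda_1\lambda_2 e_0$, $e_2^2=-\lambda_1\lambda_3 e_0$, $e_3^2=-\lambda_2\lambda_3 e_0$ into the identity. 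Since the right-hand sides lie in the center $\mathbb{R}e_0$ and $d(e_0)=0$, each left-hand side is $0$; expanding $d(e_i)e_i+e_id(e_i)$ and collecting the $e_0$-coefficient forces $D_{0i}=0$ (so the first row of $D_d$ is also zero), while the surviving coefficients give $D_{11}\lambda_1\lambda_2=0$, $D_{22}\lambda_1\lambda_3=0$, $D_{33}\lambda_2\lambda_3=0$. Under the standing hypothesis $\lambda_1\lambda_2\neq0$ this yields $D_{11}=0$, and it forces $D_{22}=D_{33}=0$ as soon as $\lambda_3\neq0$ (equivalently $\lambda_1\lambda_3\neq0$, since $\lambda_1\lambda_2\neq0$).

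Next I would exploit the three independent cross relations $e_1e_2=\lambda_1e_3$, $e_2e_3=\lambda_3e_1$, $e_3e_1=\lambda_2e_2$; the opposite-order products $e_2e_1,e_3e_2,e_1e_3$ give, by linearity of $d$ and the skew structure of the table, exactly the same information, so three products suffice. For each, $d$ of the left side is a scalar multiple of some $d(e_k)$, while $d(e_i)e_j+e_id(e_j)$ is expanded from the table; matching the four basis coefficients produces a small linear system in the $D_{ji}$. Solving it I expect to obtain $D_{22}=D_{33}$ — call this common value $d$ — together with $D_{12}=-\frac{\lambda_3}{\lambda_2}D_{21}$, $D_{13}=-\frac{\lambda_3}{\lambda_1}D_{31}$, $D_{23}=-\frac{\lambda_2}{\lambda_1}D_{32}$, while $D_{21}=:a$, $D_{31}=:b$, $D_{32}=:c$ remain free. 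Combined with the square-relation constraints of the first paragraph, this is exactly the displayed form of $D_d$, with $d=0$ whenever $\lambda_1\lambda_3\neq0$ and $d=d_{\lambda_1,\lambda_3}$ an arbitrary real when $\lambda_1\lambda_3=0$. For a complete characterization I would also record the converse: plugging an arbitrary matrix of the stated form back into the derivation identity and checking it on basis pairs (again reduced to a handful of cases) confirms every such map is a derivation.

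The proof has no conceptual obstacle; the only real difficulty is bookkeeping. Because $\mathbb{K}_{\lambda_1,\lambda_2,\lambda_3}$ is noncommutative, $d(e_i)e_j$ and $e_id(e_j)$ must be tracked separately and carefully against the table, and one must verify that the reversed-order products and the remaining cross products introduce no new relations among $a,b,c,d$ beyond those already found — that is, that the full linear system assembled from all nine products is consistent and has precisely the claimed $4$-parameter solution space (cut down by the $\lambda_1\lambda_3$-dichotomy). Organizing the computation basis vector by basis vector, coefficient by coefficient, is what makes this a finite, disciplined verification rather than a genuine difficulty.
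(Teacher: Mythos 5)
The paper does not prove this statement at all: Theorem \ref{thm1} is imported verbatim from the cited reference \cite{cha}, so there is no in-paper argument to compare against. Your proposal is the standard direct verification one would expect there, and it checks out: $d(e_0)=0$ from idempotency, the square relations $e_i^2\in\mathbb{R}e_0$ kill the first row and force $D_{11}=0$ (and $D_{22}=D_{33}=0$ exactly when $\lambda_1\lambda_3\neq0$), and the cross relation $e_1e_2=\lambda_1e_3$ alone already yields $D_{12}=-\tfrac{\lambda_3}{\lambda_2}D_{21}$, $D_{13}=-\tfrac{\lambda_3}{\lambda_1}D_{31}$, $D_{23}=-\tfrac{\lambda_2}{\lambda_1}D_{32}$ and $D_{22}=D_{33}$, with the reversed products giving only the negated equations as you claim. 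The one step you should not wave at is the converse: an arbitrary matrix of the displayed shape must still be substituted into all remaining basis products (in particular $e_2e_3$ and $e_3e_1$) to confirm no further relation among $a,b,c,d$ arises, especially in the degenerate case $\lambda_3=0$ where several table entries vanish; but that is bookkeeping, not a gap in the method.
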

Now, we state and prove our first main result
\begin{theorem}
\label{thm2}
Every local derivation of the algebra $k_{\lambda_1,\lambda_2,\lambda_3}$ ($\lambda_3\neq0)$ is a derivation.
\end{theorem}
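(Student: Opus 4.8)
The plan is to use the explicit description of $\mathrm{Der}(k_{\lambda_1,\lambda_2,\lambda_3})$ from Theorem \ref{thm1} to reduce the statement to a short finite computation. Since $\lambda_3\neq 0$ and, by the standing hypothesis of Theorem \ref{thm1}, $\lambda_1\lambda_2\neq 0$, we have $\lambda_1\lambda_3\neq 0$, so the case distinction in Theorem \ref{thm1} forces $d=0$. Hence every derivation of $k_{\lambda_1,\lambda_2,\lambda_3}$ has matrix
$$
D=\begin{pmatrix}
0&0&0&0\\
0&0&-\frac{\lambda_3}{\lambda_2}a&-\frac{\lambda_3}{\lambda_1}b\\
0&a&0&-\frac{\lambda_2}{\lambda_1}c\\
0&b&c&0
\end{pmatrix},\qquad a,b,c\in\mathbb{R},
$$
and (the converse part of Theorem \ref{thm1}, a direct check) every matrix of this form is a derivation. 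So it suffices to show that the matrix of a local derivation $\Delta$ has exactly this shape.

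Next I would read off the matrix of $\Delta$ by testing the local derivation property on the basis $e_0,e_1,e_2,e_3$. Every derivation kills the unit $e_0$ (the first column of $D$ above is zero), hence $\Delta(e_0)=D_{e_0}(e_0)=0$. Inspecting the remaining columns of $D$, there are real numbers $a_1,b_1$, $a_2,c_2$, $b_3,c_3$ with
$$
\Delta(e_1)=a_1e_2+b_1e_3,\qquad
\Delta(e_2)=-\frac{\lambda_3}{\lambda_2}a_2e_1+c_2e_3,\qquad
\Delta(e_3)=-\frac{\lambda_3}{\lambda_1}b_3e_1-\frac{\lambda_2}{\lambda_1}c_3e_2 .
$$
In other words the matrix of $\Delta$ already has the pattern of $D$, except that the entries governed by the parameter $a$ (resp. $b$, resp. $c$) may involve two a priori distinct values $a_1,a_2$ (resp. $b_1,b_3$; resp. $c_2,c_3$).

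The last step is to collapse these pairs, using linearity of $\Delta$ on the three elements $e_1+e_2$, $e_1+e_3$, $e_2+e_3$. For $x=e_1+e_2$ there is a derivation $D_x$, with some parameters $(a,b,c)$, such that
$$
-\frac{\lambda_3}{\lambda_2}a_2\,e_1+a_1e_2+(b_1+c_2)e_3=\Delta(e_1+e_2)=D_x(e_1+e_2)=-\frac{\lambda_3}{\lambda_2}a\,e_1+a\,e_2+(b+c)e_3;
$$
comparing the $e_1$- and $e_2$-coefficients and using $\lambda_2\lambda_3\neq 0$ gives $a_2=a=a_1$. Running the identical comparison for $x=e_1+e_3$ and $x=e_2+e_3$ yields $b_1=b_3$ and $c_2=c_3$, respectively. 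Therefore the matrix of $\Delta$ coincides with a matrix of the form $D$ above, so $\Delta\in\mathrm{Der}(k_{\lambda_1,\lambda_2,\lambda_3})$.

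I do not expect a genuine obstacle: the derivation algebra here is three-dimensional and fully explicit, so the whole argument is linear bookkeeping. The only point requiring a little care is the middle step — correctly determining which coordinates of $\Delta(e_i)$ are forced to vanish and which remain free — together with verifying that the three mixed test vectors $e_i+e_j$ are indeed enough to reduce the six parameters $a_1,a_2,b_1,b_3,c_2,c_3$ to three.
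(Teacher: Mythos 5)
Your proposal is correct and follows essentially the same route as the paper: read off the matrix of $\Delta$ on the basis $e_0,e_1,e_2,e_3$ using Theorem \ref{thm1} (with $d=0$ since $\lambda_1\lambda_3\neq 0$), then use linearity of $\Delta$ on $e_1+e_2$, $e_1+e_3$, $e_2+e_3$ to identify the six a priori distinct parameters in pairs, so that the resulting matrix is itself of derivation form. Your version is, if anything, slightly more explicit than the paper's about where $\lambda_3\neq 0$ is used in the coefficient comparison.
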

\begin{proof}
 Let $\Delta$ be an arbitrary local derivation of $k_{\lambda_1,\lambda_2,\lambda_3}$ and write
$$\Delta(x)=BX, \hspace{0.3 cm} x \in \mathcal{H}_d,$$
where $B=(b_{ij})_{0 \leq i,j \leq 3}$, $X=(x_0,x_1,x_2,x_3)$ is the vector corresponding to x. Then for every $x \in k_{\lambda_1,\lambda_2,\lambda_3}$ there exist elements $a^x,b^x,c^x \in \mathbb{R}$ such that
$$BX=\begin{pmatrix}
		0 & 0 &0 &0\\
		0&0&\frac{-\lambda_3}{\lambda_2}a^x & -\frac{\lambda_3}{\lambda_1} b^x \\
		0 & a^x & 0 & -\frac{\lambda_2}{\lambda_1} c^x \\
		0 & b^x & c^x & 0 
		\end{pmatrix} 
\begin{pmatrix}
x_0\\
x_1\\
x_2\\
x_3
\end{pmatrix}$$
In other words 
\begin{gather*}
		  	 \begin{cases}
		  	b_{00}x_0+b_{01}x_1+b_{02}x_2+b_{03}x_3=0 ;&\\
		  	\,b_{10}x_0+b_{11}x_1+b_{12}x_2+b_{13}x_3=-\frac{\lambda_3}{\lambda_2}a^x x_2 -\frac{\lambda_3}{\lambda_1} b^x x_3; \\
		  	\,b_{20}x_0+b_{21}x_1+b_{22}x_2+b_{23}x_3=a^x x_1 -\frac{\lambda_2}{\lambda_1} c^x x_3; &\\	
		  	\, b_{30}x_0+b_{31}x_1+b_{32}x_2+b_{33}x_3=b^x x_1+c^x x_2 .&\\ 	  	 
		  	 \end{cases}
\end{gather*}
Taking $x=e_0=(1,0,0,0)$, we have
\begin{gather*}
		  	 \begin{cases}
		  	b_{00}=0 ;&\\
		  	\,b_{10}=0; \\
		  	\,b_{20}=0;&\\	
		  	\, b_{30}=0.&\\ 	  	 
		  	 \end{cases}
\end{gather*}
For $x=e_1=(0,1,0,0)$, we get
\begin{gather*}
		  	 \begin{cases}
		  	b_{01}=0 ;&\\
		  	\,b_{11}=0 ; \\
		  	\,b_{21}=a^{e_1}  ;&\\	
		  	\, b_{31}=b^{e_1} .&\\ 	  	 
		  	 \end{cases}
\end{gather*}
Taking also $x=e_2=(0,0,1,0)$, we deduce
\begin{gather*}
		  	 \begin{cases}
		  	b_{02}=0 ;&\\
		  	\,b_{12}=-\frac{\lambda_3}{\lambda_2}a^{e_2} ; \\
		  	\,b_{22}=0  ;&\\	
		  	\,b_{32}=c^{e_2} .&\\ 	  	 
		  	 \end{cases}
\end{gather*}
Finally taking $x=e_3=(0,0,0,1)$, we get
\begin{gather*}
		  	 \begin{cases}
		  	b_{03}=0 ;&\\
		  	\,b_{13}=-\frac{\lambda_3}{\lambda_1} b^{e_3} ; \\
		  	\,b_{23}=-\frac{\lambda_2}{\lambda_1} c^{e_3} ;&\\	
		  	\, b_{33}=0 .&\\ 	  	 
		  	 \end{cases}
\end{gather*}
These equalities show that the matrix of the linear map $\Delta$ is
$$B=\begin{pmatrix}
		0 & 0 &0 &0\\
		0&0&\frac{-\lambda_3}{\lambda_2}a^{e_2} & -\frac{\lambda_3}{\lambda_1} b^{e_3} \\
		0 & a^{e_1} & 0 & -\frac{\lambda_2}{\lambda_1} c^{e_3} \\
		0 & b^{e_1} & c^{e_2} & 0 
		\end{pmatrix} $$
The equalities $\Delta(e_1+e_2)=\Delta(e_1)+\Delta(e_2)$ which implies $$a^{e_1}=a^{e_2}.$$
From the equalities $\Delta(e_1+e_3)=\Delta(e_1)+\Delta(e_3)$ we have $$b^{e_1}=b^{e_2}.$$
From the equalities $\Delta(e_2+e_3)=\Delta(e_2)+\Delta(e_3)$ we have $$c^{e_2}=c^{e_3}.$$
Therefore, 
$$B=\begin{pmatrix}
		0 & 0 &0 &0\\
		0&0&\frac{-\lambda_3}{\lambda_2}a^{e_1} & -\frac{\lambda_3}{\lambda_1} b^{e_1} \\
		0 & a^{e_1} & 0 & -\frac{\lambda_2}{\lambda_1} c^{e_2} \\
		0 & b^{e_1} & c^{e_2} & 0 
		\end{pmatrix} $$
 Therefore, by Theorem \ref{thm1}, $\Delta$ is a derivation.
\end{proof}
\textbf{Question} What happens if $\lambda_3=0$?\\
In the case $\lambda_3=0$, the linearity of $\Delta$ does not imply, for example $a^{e_1}=a^{e_2}$.  So, probably the case of $\lambda_3=0$, there exists a local derivation wich is not a derivation.\\
Now, we prove that any 2-Local derivation is a derivation.
\begin{theorem}
\label{thm3}
Every 2-local derivation of the algebras $k_{\lambda_1,\lambda_2,\lambda_3}$ $(\lambda_3\neq0)$ is a derivation.
\end{theorem}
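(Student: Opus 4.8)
The plan is to mimic the proof of Theorem \ref{thm2}, but with extra care: a $2$-local derivation $\Delta$ is a priori not linear, so determining $\Delta$ on the basis $\{e_0,e_1,e_2,e_3\}$ is not by itself enough to conclude. In the present setting we have $\lambda_1\lambda_2\lambda_3\neq0$ (Theorem \ref{thm1} requires $\lambda_1\lambda_2\neq0$, and $\lambda_3\neq0$ by hypothesis), hence $\lambda_1\lambda_3\neq0$, so Theorem \ref{thm1} says that every derivation of $k_{\lambda_1,\lambda_2,\lambda_3}$ is of the form $D=M(a,b,c)$, where $M(a,b,c)$ denotes the matrix from \eqref{eq1} with $d=0$. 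Written out on the basis, $M(a,b,c)e_0=0$, $M(a,b,c)e_1=a\,e_2+b\,e_3$, $M(a,b,c)e_2=-\tfrac{\lambda_3}{\lambda_2}a\,e_1+c\,e_3$ and $M(a,b,c)e_3=-\tfrac{\lambda_3}{\lambda_1}b\,e_1-\tfrac{\lambda_2}{\lambda_1}c\,e_2$; the scalars $\tfrac{\lambda_3}{\lambda_2},\tfrac{\lambda_3}{\lambda_1},\tfrac{\lambda_2}{\lambda_1}$ are all nonzero, and this is what drives the argument.

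First I would pin down $\Delta$ on the basis. Applying the $2$-local property to $(e_0,e_0)$ gives $\Delta(e_0)=D(e_0)=0$ for some derivation $D$. Next, applying it to each of the pairs $(e_1,e_2)$, $(e_1,e_3)$, $(e_2,e_3)$ yields, for each pair, one derivation realizing $\Delta$ on both of its entries. Since the coordinates of $\Delta(e_1),\Delta(e_2),\Delta(e_3)$ in the basis are uniquely determined, comparing the outputs coming from the three different pairs and cancelling the nonzero scalars above forces all the parameters to be consistent: there are reals $a_0,b_0,c_0$ with $\Delta(e_1)=a_0e_2+b_0e_3$, $\Delta(e_2)=-\tfrac{\lambda_3}{\lambda_2}a_0e_1+c_0e_3$, $\Delta(e_3)=-\tfrac{\lambda_3}{\lambda_1}b_0e_1-\tfrac{\lambda_2}{\lambda_1}c_0e_2$. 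In other words $\Delta(e_i)=D_0(e_i)$ for $i=0,1,2,3$, where $D_0:=M(a_0,b_0,c_0)\in Der(k_{\lambda_1,\lambda_2,\lambda_3})$.

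Then I would set $\nabla:=\Delta-D_0$. The difference of a $2$-local derivation and a derivation is again a $2$-local derivation, so $\nabla$ is $2$-local, and by the previous step $\nabla(e_i)=0$ for all $i$. The remaining task, and the only genuinely non-linear part of the argument, is to show $\nabla\equiv0$. Fix $x=x_0e_0+x_1e_1+x_2e_2+x_3e_3$. For each $i\in\{1,2,3\}$ the $2$-local property applied to $(x,e_i)$ produces a derivation $D^{(i)}$ with $\nabla(x)=D^{(i)}(x)$ and $D^{(i)}(e_i)=\nabla(e_i)=0$; the equation $D^{(i)}(e_i)=0$ forces two of the three parameters of $D^{(i)}$ to vanish, so that $\nabla(x)$ is represented by three rather degenerate vectors. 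Matching the $e_1$-, $e_2$- and $e_3$-components of these three representations (once more using that the scalars $\tfrac{\lambda_3}{\lambda_2},\tfrac{\lambda_3}{\lambda_1},\tfrac{\lambda_2}{\lambda_1}$ are nonzero) leaves only $\nabla(x)=0$. Hence $\Delta=D_0$ is a derivation.

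I expect this last step to be the main obstacle: no single pair $(x,e_i)$ determines $\nabla(x)$, and one must combine the three pairs $(x,e_1),(x,e_2),(x,e_3)$ to squeeze out the conclusion. This is also exactly where the hypothesis $\lambda_3\neq0$ (together with $\lambda_1\lambda_2\neq0$) is used, just as the analogous hypothesis was needed in Theorem \ref{thm2}; if $\lambda_3=0$ the degeneracy computations above collapse and one expects, as in the Remark following Theorem \ref{thm2}, that nontrivial examples may exist.
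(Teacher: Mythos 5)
Your proposal is correct, and it is worth comparing with the paper's own argument because, while both proofs anchor the unknown $2$-local map at the basis vectors $e_i$, yours is organized differently and is in fact more careful. The paper fixes $i$ and argues that $\Delta(e_i)=D_{x,e_i}(e_i)=D_{y,e_i}(e_i)$ forces \emph{all three} parameters of $M^{x,e_i}$ and $M^{y,e_i}$ to coincide, so that the witnessing derivation is independent of $x$ and $\Delta$ equals it outright. As you correctly observe, the action of a derivation $M(a,b,c)$ on a single $e_i$ determines only two of the three parameters (e.g.\ $M(a,b,c)e_1=ae_2+be_3$ says nothing about $c$), so the paper's one-line deduction $M^{x,e_i}=M^{y,e_i}$ is not literally justified for a fixed $i$; the information from $i=1,2,3$ must be combined. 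Your two-stage argument does exactly this: first you determine $\Delta$ on the basis via the pairs $(e_1,e_2),(e_1,e_3),(e_2,e_3)$ and extract a genuine derivation $D_0$ agreeing with $\Delta$ there; then you pass to $\nabla=\Delta-D_0$ (correctly noting this is again $2$-local) and, for an arbitrary $x$, play the three degenerate witnesses $D^{(i)}$ with $D^{(i)}(e_i)=0$ against one another. A quick check confirms your claim: $D^{(1)}=M(0,0,c_1)$, $D^{(2)}=M(0,b_2,0)$, $D^{(3)}=M(a_3,0,0)$, and equating the three expressions for $\nabla(x)$ componentwise yields $a_3x_1=a_3x_2=b_2x_1=b_2x_3=c_1x_2=c_1x_3=0$, whence $\nabla(x)=0$. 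So your route buys a complete proof where the paper's version leaves a (reparable) gap, at the cost of the extra normalization step $\nabla=\Delta-D_0$ and the explicit three-fold comparison.
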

\begin{proof}
Let $\Delta$ be a 2-local derivation of $k_{\lambda_1,\lambda_2,\lambda_3}$ and $i=0,1,2,3$. Then, by definition, for every element $x \in \mathbb{K}_{\lambda_1,\lambda_2, \lambda_3}$, there exists a derivation $D_{x,e_i}$ of $\mathbb{K}_{\lambda_1,\lambda_2,\lambda_3}$  such that
\begin{center}
$\Delta(x)=D_{x,e_i}(x), \hspace{0.2 cm} \Delta(e_i)=D_{x,e_i}(e_i)$.
\end{center}
By Theorem \ref{thm1}, the matrix $M^{x,e_i}$ of the derivation $D_{x,e_i}$ has the following matrix form:
$$M^{x,e_i}=\begin{pmatrix}
		0 & 0 &0 &0\\
		0&0&\frac{-\lambda_3}{\lambda_2}a^{x,e_i} & -\frac{\lambda_3}{\lambda_1} b^{x,e_i} \\
		0 & a^{x,e_i} & 0 & -\frac{\lambda_2}{\lambda_1} c^{x,e_i} \\
		0 & b^{x,e_i} & c^{x,e_i} & 0  
\end{pmatrix}.$$
Let $y \in \mathcal{H}_d$. Then there exists a derivation $D_{y,e_i}$ of $\mathcal{H}_d$ such that 
\begin{center}
$\Delta(y)=D_{y,e_i}(y), \hspace{0.3 cm} \Delta(e_i)=D_{y,e_i}(e_i).$
\end{center}
By Theorem \ref{thm1}, the matrix $M^{y,e_i}$ of the derivation $D_{y,e_i}$ has the following matrix form:
$$M^{y,e_i}=\begin{pmatrix}
		0 & 0 &0 &0\\
		0&0&\frac{-\lambda_3}{\lambda_2}a^{y,e_i} & -\frac{\lambda_3}{\lambda_1} b^{y,e_i} \\
		0 & a^{y,e_i} & 0 & -\frac{\lambda_2}{\lambda_1} c^{y,e_i} \\
		0 & b^{y,e_i} & c^{y,e_i} & 0   
		\end{pmatrix}.$$
Since $\Delta(e_i)=D_{x,e_i}(e_i)=D_{y,e_i}(e_i)$, we have
\begin{gather*}
		  	 \begin{cases}
		  	\Delta(e_0)=0 ;&\\
		  	\,a^{x,e_i}=a^{y,e_i} ; \\
		  	\,b^{x,e_i}=b^{y,e_i}  ;&\\	
		  	\, c^{x,e_i}=c^{y,e_i}  .&\\ 	  	 
		  	 \end{cases}
\end{gather*}
That it $$M^{x,e_i}=M^{y,e_i}.$$
Therefore, for any $x \in \mathcal{H}_d$
$$\Delta(x)=D_{y,e_i}(x),$$
that it $D_{x,e_i}$ does not depend on x. Hence, $\Delta$ is a derivation by Theorem \ref{thm1}.
\end{proof}
\section{ Biderivations of  3-parameter generalized quaternion}
In this section, we determine all biderivations of  3-parameter generalized quaternion. 
\begin{definition}
\label{def5}
 Let $\mathcal{A}$ be an arbitrary algebra. A bilinear map $\delta : \mathcal{A} \times \mathcal{A} \rightarrow \mathcal{A}$ is called a biderivation on $\mathcal{A}$ if
 $$\delta(xy,z)=x\delta(y,z)+\delta(x,z)y,$$
 $$\delta(x,yz)=y\delta(x,z)+\delta(x,y)z,$$
 for all $x,y,z \in \mathcal{A}$.
 \end{definition}
  Denote by $BDer(\mathcal{A})$ the set of all biderivations on $\mathcal{A}$ which is clearly a vector space.\\
  An example of a biderivation of $\mathbb{K}_{\lambda_1,\lambda_2,\lambda_3}$ is the map 
  $$\delta: \mathbb{K}_{\lambda_1,\lambda_2,\lambda_3}\times \mathbb{K}_{\lambda_1,\lambda_2,\lambda_3} \to \mathbb{K}_{\lambda_1,\lambda_2,\lambda_3}, (x_0+\tilde{x},y_0+\tilde{y}) \mapsto \delta(x_0+\tilde{x},y_0+\tilde{y}):=\mu \tilde{x} \wedge \tilde{y}.$$
A $\delta \in BDer(\mathcal{A})$ is called symmetric if $\delta(x,y)=\delta(y,x)$ for all $x,y \in \mathcal{A}$, and is called skew-symmetric if $\delta(x,y)=-\delta(y,x)$ for all $x,y \in \mathcal{A}$. Denote by $BDer_+(\mathcal{A})$ and $BDer_-(\mathcal{A})$ the subspace of all symmetric biderivations and all skew-symmetric biderivations on $\mathcal{A}$ respectively.\\
For any $\delta \in BDer(\mathcal{A})$, we define two bilinear maps by 
$$\delta^+(x,y)=\delta(x,y)+\delta(y,x), \hspace{0.3 cm}\delta^-(x,y)=\delta(x,y)-\delta(y,x).$$
It is easy to see that $\delta^+ \in BDer_+(\mathcal{A})$ and $\delta^- \in BDer_-(\mathcal{A})$. Since $\delta =\frac{1}{2}(\delta^+ + \delta^-)$ it follows that
$$BDer(\mathcal{A})=BDer_+(\mathcal{A}) \oplus BDer_-(\mathcal{A}).$$
To characterize $BDer(\mathcal{A})$, we only need to characterize $BDer_+(\mathcal{A})$ and $BDer_-(\mathcal{A})$.\\
Now we give a  description of derivations on 3-parameter generalized quaternion $\mathbb{K}_{\lambda_1,\lambda_2, \lambda_3}$.\\
For any $x=x_0+\tilde{x}\in \mathbb{K}_{\lambda_1,\lambda_2, \lambda_3} $ we denote by $ad_{\wedge,\tilde{x}}$ the left multiplication by $x$ given by $$ad_{\wedge,x}y=\tilde{x}\wedge\tilde{y}=\begin{vmatrix}
		\frac{\lambda_3}{\lambda_1} e_1&\frac{\lambda_2}{\lambda_1} e_2&e_3\\
		x_1&x_2&x_3 \\
		y_1&y_2&y_3
		\end{vmatrix}$$ for any $y=y_0+\tilde{y}\in \mathbb{K}_{\lambda_1,\lambda_2, \lambda_3}$. We have the following lemma
\begin{lemma}\label{lem1}
    Let $D$ be a derivation of $\mathbb{K}_{\lambda_1,\lambda_2, \lambda_3}$ $(\lambda_3\neq 0)$. Then, there exists $x\in \mathbb{K}_{\lambda_1,\lambda_2, \lambda_3}$ such that $D=ad_{\wedge,x}$.
\end{lemma}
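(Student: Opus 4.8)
The plan is to identify the linear map $ad_{\wedge,x}$ with a matrix in the basis $\{e_0,e_1,e_2,e_3\}$, compare it with the general matrix form of a derivation provided by Theorem~\ref{thm1}, and then solve the resulting linear system for the components of $x$.

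Since $\lambda_3\neq0$ and the matrix in \eqref{eq1} already presupposes $\lambda_1\lambda_2\neq0$, we have $\lambda_1\lambda_3\neq0$, so the parameter $d$ of Theorem~\ref{thm1} vanishes; thus $D$ is represented by
\[
\begin{pmatrix}
0 & 0 & 0 & 0\\
0 & 0 & -\tfrac{\lambda_3}{\lambda_2}a & -\tfrac{\lambda_3}{\lambda_1}b\\
0 & a & 0 & -\tfrac{\lambda_2}{\lambda_1}c\\
0 & b & c & 0
\end{pmatrix}
\]
for some $a,b,c\in\mathbb{R}$. On the other hand, expanding along its first row the determinant defining $ad_{\wedge,x}y=\tilde x\wedge\tilde y$ gives $ad_{\wedge,x}y=\tfrac{\lambda_3}{\lambda_1}(x_2y_3-x_3y_2)e_1+\tfrac{\lambda_2}{\lambda_1}(x_3y_1-x_1y_3)e_2+(x_1y_2-x_2y_1)e_3$, so the matrix of $ad_{\wedge,x}$ (which is independent of $x_0$) is
\[
\begin{pmatrix}
0 & 0 & 0 & 0\\
0 & 0 & -\tfrac{\lambda_3}{\lambda_1}x_3 & \tfrac{\lambda_3}{\lambda_1}x_2\\
0 & \tfrac{\lambda_2}{\lambda_1}x_3 & 0 & -\tfrac{\lambda_2}{\lambda_1}x_1\\
0 & -x_2 & x_1 & 0
\end{pmatrix}.
\]
Equating the two matrices entry by entry yields an (a priori over-determined) system whose solution is $x_1=c$, $x_2=-b$, $x_3=\tfrac{\lambda_1}{\lambda_2}a$; hence $D=ad_{\wedge,x}$ with $x=c\,e_1-b\,e_2+\tfrac{\lambda_1}{\lambda_2}a\,e_3$ (to which one may add any scalar part without changing $ad_{\wedge,x}$).

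The only point needing attention is to verify that this over-determined system is genuinely consistent, i.e. that the three "redundant" equations (coming from entries in symmetric positions) do not impose an extra constraint on $a,b,c$. This is automatic, since the way entries are coupled in \eqref{eq1} is precisely the one produced by a wedge product, so no real obstacle arises; the rest is a short, direct computation using $\lambda_1\lambda_2\neq0$.
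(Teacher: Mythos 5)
Your proof is correct and follows essentially the same route as the paper: both arguments reduce the lemma to the matrix classification of Theorem~\ref{thm1} (with $d=0$ since $\lambda_1\lambda_3\neq0$) together with an explicit computation of the matrix of $ad_{\wedge,x}$ in the basis $\lbrace e_0,e_1,e_2,e_3\rbrace$. The only difference is in the last step and is cosmetic: the paper evaluates $ad_{\wedge,e_i}$ on the basis vectors and concludes by a dimension count ($\dim Der=3$, with $ad_{\wedge,e_1},ad_{\wedge,e_2},ad_{\wedge,e_3}$ linearly independent), whereas you equate the two general matrices and solve, obtaining $x_1=c$, $x_2=-b$, $x_3=\tfrac{\lambda_1}{\lambda_2}a$ explicitly --- which is slightly more informative and, as you note, makes the consistency of the over-determined system (which does check out) the only point requiring verification.
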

\begin{proof}
 By a straightforward calculation we prove that $ad_{\wedge,x}$ is a derivation of $\mathbb{K}_{\lambda_1,\lambda_2, \lambda_3}$ and 
 \begin{eqnarray*}
     &&ad_{\wedge,e_0}=\begin{pmatrix}
		0 & 0 &0 &0\\
		0&0&0 & 0 \\
		0 & 0 & 0 & 0 \\
		0 & 0 & 0 & 0  
\end{pmatrix} \quad ad_{\wedge,e_1}=\begin{pmatrix}
		0 & 0 &0 &0\\
		0&0&0 & 0 \\
		0 & 0 & 0 & -\frac{\lambda_2}{\lambda_1}  \\
		0 & 0 & 1 & 0  
\end{pmatrix}\\
    && ad_{\wedge,e_2}= \begin{pmatrix}
		0 & 0 &0 &0\\
		0&0&0 & \frac{\lambda_3}{\lambda_1}  \\
		0 & 0 & 0 & 0 \\
		0 & -1 & 0 & 0  
\end{pmatrix}\quad ad_{\wedge,e_3}=\begin{pmatrix}
		0 & 0 &0 &0\\
		0&0&\frac{\lambda_3}{\lambda_1} &0 \\
		0 & -\frac{\lambda_2}{\lambda_1} & 0 & 0 \\
		0 & 0 & 0 & 0  
\end{pmatrix} .
 \end{eqnarray*}
 In the other hand, by Theorem \ref{thm1}, $dim(Der(\mathbb{K}_{\lambda_1,\lambda_2,\lambda}))=3$. Therefor, $\lbrace ad_{\wedge,e_1}, ad_{\wedge,e_2}, ad_{\wedge,e_3} \rbrace$ form a basis of $dim(Der(\mathbb{K}_{\lambda_1,\lambda_2,\lambda}))=3$. This complete the proof.
\end{proof}
\begin{lemma}\label{lem2}
    Let $\delta$ be a biderivation of $\mathbb{K}_{\lambda_1,\lambda_2, \lambda_3}$ ($\lambda_3\neq 0$). Then, there exist two linear map $\mathbb{K}_{\lambda_1,\lambda_2, \lambda_3} \to span\lbrace e_1,e_2,e_3\rbrace$ such that 
    $$\delta(w,y)=\tilde{\phi(x)}\wedge\tilde{y} =\tilde{x}\wedge \tilde{\psi(y)}, \quad \forall x,y\in \mathbb{K}_{\lambda_1,\lambda_2, \lambda_3}.$$
\end{lemma}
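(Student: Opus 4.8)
The plan is to deduce the statement directly from Lemma~\ref{lem1} together with the bilinearity of $\delta$, essentially with no computation. First I would fix $x\in\mathbb{K}_{\lambda_1,\lambda_2,\lambda_3}$ and observe that, by Definition~\ref{def5}, the linear map $\delta(x,\cdot)\colon y\mapsto\delta(x,y)$ obeys $\delta(x,yz)=\delta(x,y)z+y\delta(x,z)$, i.e.\ it is a derivation of $\mathbb{K}_{\lambda_1,\lambda_2,\lambda_3}$. By Lemma~\ref{lem1} this derivation equals $ad_{\wedge,z}$ for some $z$, and since $ad_{\wedge,z}y=\tilde z\wedge\tilde y$ depends only on the vector part of $z$ (indeed $ad_{\wedge,e_0}=0$), I may take the representative $z$ in $\mathrm{span}\{e_1,e_2,e_3\}$. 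Moreover the three matrices $ad_{\wedge,e_1},ad_{\wedge,e_2},ad_{\wedge,e_3}$ written down in the proof of Lemma~\ref{lem1} are linearly independent and $\dim Der(\mathbb{K}_{\lambda_1,\lambda_2,\lambda_3})=3$, so the linear map
$$\Phi\colon\mathrm{span}\{e_1,e_2,e_3\}\longrightarrow Der(\mathbb{K}_{\lambda_1,\lambda_2,\lambda_3}),\qquad z\longmapsto ad_{\wedge,z},$$
is a bijection.

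I would then set $\phi(x):=\Phi^{-1}\big(\delta(x,\cdot)\big)\in\mathrm{span}\{e_1,e_2,e_3\}$, so that by construction $\delta(x,y)=ad_{\wedge,\phi(x)}y=\widetilde{\phi(x)}\wedge\tilde y$ for all $y$. Linearity of $\phi$ is free: since $\delta$ is bilinear, $x\mapsto\delta(x,\cdot)$ is a linear map from $\mathbb{K}_{\lambda_1,\lambda_2,\lambda_3}$ into $Der(\mathbb{K}_{\lambda_1,\lambda_2,\lambda_3})$, and $\phi=\Phi^{-1}\circ\big(x\mapsto\delta(x,\cdot)\big)$ is a composition of linear maps. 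Symmetrically, for fixed $y$ the map $\delta(\cdot,y)\colon x\mapsto\delta(x,y)$ is a derivation (this is the first Leibniz identity in Definition~\ref{def5}), hence equals $ad_{\wedge,w(y)}$ for a unique $w(y)\in\mathrm{span}\{e_1,e_2,e_3\}$; putting $\psi(y):=-w(y)$ gives a linear map with $\delta(x,y)=\widetilde{w(y)}\wedge\tilde x=\tilde x\wedge\widetilde{\psi(y)}$. Finally the identity $\widetilde{\phi(x)}\wedge\tilde y=\tilde x\wedge\widetilde{\psi(y)}$ holds automatically, both sides being $\delta(x,y)$.

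I do not expect any genuine obstacle; the argument is a short formal consequence of Lemma~\ref{lem1}. The only points I would want to spell out are: (i) that $\Phi$ is injective, which follows from the independence of the explicit matrices in Lemma~\ref{lem1} together with the dimension count $\dim Der=3$, valid under the hypothesis $\lambda_3\neq0$ (which forces $\lambda_1\lambda_2\neq0$ for the structure constants to be defined, hence $d=0$ in Theorem~\ref{thm1}); and (ii) that $ad_{\wedge,z}$ always takes values in $\mathrm{span}\{e_1,e_2,e_3\}$ (its matrix has zero first row), so that $\delta(x,y)$ has trivial scalar part and $\phi,\psi$ genuinely land in $\mathrm{span}\{e_1,e_2,e_3\}$ as the statement requires.
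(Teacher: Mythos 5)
Your proposal is correct and follows essentially the same route as the paper: fix one argument, recognize the resulting map as a derivation, invoke Lemma~\ref{lem1} to identify it with some $ad_{\wedge,z}$ with $z\in\mathrm{span}\{e_1,e_2,e_3\}$, use uniqueness of $z$ (which you phrase as injectivity of $\Phi$ via the dimension count, where the paper argues directly that $ad_{\wedge,\tilde z}=ad_{\wedge,\tilde z'}$ forces $\tilde z=\tilde z'$) to define $\phi$ and $\psi$, and deduce their linearity from the bilinearity of $\delta$. Your handling of the sign in $\psi$ via the antisymmetry of $\wedge$ matches the paper's intent, and your two flagged points (injectivity of $\Phi$ and the image lying in $\mathrm{span}\{e_1,e_2,e_3\}$) are exactly the details the paper's proof also relies on.
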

\begin{proof}
Four a fixed element $x\in \mathbb{K}_{\lambda_1,\lambda_2, \lambda_3}$, we define a map $\delta_x :\mathbb{K}_{\lambda_1,\lambda_2, \lambda_3} \to \mathbb{K}_{\lambda_1,\lambda_2, \lambda_3}$ by
$$\delta_x(y)=\delta(x,y), \quad \forall y\in \mathbb{K}_{\lambda_1,\lambda_2, \lambda_3}.$$
Since $\delta$ is a biderivation, then $\delta_x$ is a derivation. By Lemma \ref{lem1} there exists an element $\tilde{z} \in span\lbrace e_1,e_2,e_3\rbrace$ such that $\delta_x=ad_{\wedge,\tilde{z}}$. In fact, such $\tilde{z}$ is unique. Otherewise, if there exists an element $\tilde{z'}\in span\lbrace e_1,e_2, e_3\rbrace$ such that $ad_{\wedge,\tilde{z}}=ad_{\wedge,\tilde{z'}}$. Then for any $y\in \mathbb{K}_{\lambda_1,\lambda_2, \lambda_3}$, we have $\tilde{z'}\wedge \tilde{y}=ad_{\wedge,\tilde{z'}}(y)=ad_{\wedge,\tilde{z}}(y)=\tilde{z}\wedge \tilde{y}$, so $\tilde{z'}=\tilde{y}$. Thus we can define a map $\phi:\mathbb{K}_{\lambda_1,\lambda_2, \lambda_3} \to span\lbrace e_1,e_2,e_3\rbrace$ such that $\delta_x=ad_{\wedge,\phi(x)}$.i.e $\delta(x,y)=\phi(x)\wedge \tilde{y}$. Furthermore, the bilinearity of $\delta$ implies that $\phi$ is linear. Similarly, we define a map $\delta_z$ from $\mathbb{K}_{\lambda_1,\lambda_2, \lambda_3}$ into it self given by $\delta_z(y)=\delta(y,z)$ for all $y\in \mathbb{K}_{\lambda_1,\lambda_2, \lambda_3}$. We can obtain a linear $\psi$ from $\mathbb{K}_{\lambda_1,\lambda_2, \lambda_3}$ into $span\lbrace e_1,e_2,e_3\rbrace$ such that $\delta(y,z)=ad_{\wedge,-\psi(y)}\tilde{x}\wedge\psi(y)$. The proof is completed.
\end{proof}
\begin{lemma}\label{lem3}
    Let $\phi,\psi$ be defined by Lemma \ref{lem2}. Then, there exists a real number $\mu$ such that 
    $$\phi(e_i)=\psi(e_i)=\mu e_i, \quad \forall i=1,2,3.$$
\end{lemma}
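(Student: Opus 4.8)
The plan is to read the statement off the identity supplied by Lemma~\ref{lem2}, namely $\phi(x)\wedge\tilde y=\tilde x\wedge\psi(y)$ for all $x,y\in\mathbb{K}_{\lambda_1,\lambda_2,\lambda_3}$ (note that $\phi(x),\psi(y)\in\mathrm{span}\{e_1,e_2,e_3\}$, so the tildes on them are superfluous), by specialising $y$ successively to $e_1,e_2,e_3$ and comparing components. Recall that expanding the determinant defining $\wedge$ gives
\[
\tilde u\wedge\tilde v=\tfrac{\lambda_3}{\lambda_1}(u_2v_3-u_3v_2)\,e_1+\tfrac{\lambda_2}{\lambda_1}(u_3v_1-u_1v_3)\,e_2+(u_1v_2-u_2v_1)\,e_3,
\]
and that the standing hypotheses give $\lambda_1\lambda_2\lambda_3\neq0$, so all three coefficients here are nonzero. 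Write $\phi(x)=f_1(x)e_1+f_2(x)e_2+f_3(x)e_3$ with linear functionals $f_1,f_2,f_3$, and $\psi(e_j)=\psi_{1j}e_1+\psi_{2j}e_2+\psi_{3j}e_3$.

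First I would set $y=e_1$. Computing both sides of $\phi(x)\wedge e_1=\tilde x\wedge\psi(e_1)$ from the displayed formula, the $e_1$-component reads $\tfrac{\lambda_3}{\lambda_1}(x_2\psi_{31}-x_3\psi_{21})=0$ for all $x$, forcing $\psi_{21}=\psi_{31}=0$, i.e. $\psi(e_1)\in\mathbb{R}e_1$; the $e_2$- and $e_3$-components then give $f_3(x)=\psi_{11}x_3$ and $f_2(x)=\psi_{11}x_2$. The same computation with $y=e_2$ yields $\psi(e_2)\in\mathbb{R}e_2$ together with $f_1(x)=\psi_{22}x_1$ and $f_3(x)=\psi_{22}x_3$, and with $y=e_3$ it yields $\psi(e_3)\in\mathbb{R}e_3$ together with $f_1(x)=\psi_{33}x_1$ and $f_2(x)=\psi_{33}x_2$. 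Matching the two formulas obtained for each of $f_1,f_2,f_3$ forces $\psi_{11}=\psi_{22}=\psi_{33}$; writing $\mu$ for this common value, we get $f_i(x)=\mu x_i$, hence $\phi(x)=\mu\tilde x$ for every $x$ and in particular $\phi(e_i)=\mu e_i$, while $\psi(e_i)=\psi_{ii}e_i=\mu e_i$. This is exactly the assertion (and it already yields $\delta(x,y)=\mu\,\tilde x\wedge\tilde y$, the content of the second main theorem).

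I do not anticipate a genuine obstacle here: once the right substitutions are made the argument is a short, mechanical comparison of the nine scalar equations produced by $y=e_1,e_2,e_3$. The only point requiring care is to invoke $\lambda_1\lambda_2\neq0$ precisely when passing from the vanishing of a component of the form $\tfrac{\lambda_k}{\lambda_1}(\,\cdots\,)$ to the vanishing of the bracketed factor. For a coordinate-free variant one may write $\tilde u\wedge\tilde v=L(\tilde u\times\tilde v)$, where $\times$ is the ordinary cross product on $\mathbb{R}^3$ and $L=\mathrm{diag}(\lambda_3/\lambda_1,\lambda_2/\lambda_1,1)$ is invertible; the Lemma~\ref{lem2} identity then becomes $\phi(x)\times\tilde y=\tilde x\times\psi(y)$, and pairing the specialisation $y=e_j$ with $e_j$ gives $\det[\tilde x,\psi(e_j),e_j]=0$ for all $x$, i.e. $\psi(e_j)\parallel e_j$, after which one finishes as above.
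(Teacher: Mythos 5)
Your proof is correct and follows essentially the same route as the paper: both expand the Lemma~\ref{lem2} identity $\phi(x)\wedge\tilde y=\tilde x\wedge\psi(y)$ in the basis $\{e_1,e_2,e_3\}$ and compare components, using $\lambda_1\lambda_2\lambda_3\neq0$ to cancel the coefficients $\lambda_k/\lambda_1$. The only (harmless) difference is that you keep $x$ general while specialising $y=e_j$, which directly yields the slightly stronger conclusion $\phi(x)=\mu\tilde x$ for all $x$, whereas the paper specialises both arguments to basis vectors.
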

\begin{proof}
    For any $i=1,2,3$. By letting
    $$\phi(e_i)=\sum_{j=1}^3a_{ji}e_j \quad \mbox{and}\quad \psi(e_i)=\sum_{j=1}^3b_{ji}e_j.$$
 The equality $\phi(e_i) \wedge e_i = e_i \wedge \psi(e_i)$ imply that
\[
\begin{cases}
    a_{21} = -b_{21}, & a_{31} = -b_{31}; \\
    a_{12} = -b_{12}, & a_{32} = -b_{32}; \\
    a_{13} = -b_{13}, & a_{23} = -b_{23}.
\end{cases}
\]
The equality $\phi(e_1) \wedge e_2 = e_1 \wedge \psi(e_2)$ imply that
$$a_{11}=b_{22}\quad \mbox{and}\quad a_{31}=b_{32}=0.$$
The equality $\phi(e_1) \wedge e_3 = e_1 \wedge \psi(e_3)$ imply that
$$a_{11}=b_{33} \quad \mbox{and} \quad a_{21}=b_{23}=0.$$
The equality $\phi(e_2) \wedge e_3 = e_2 \wedge \psi(e_3)$ imply that
$$a_{22}=b_{33}\quad \mbox{and}\quad a_{12}=b_{13}=0.$$
The equality $\phi(e_3) \wedge e_1 = e_3 \wedge \psi(e_1)$ imply that
$$a_{33}=b_{11} \quad \mbox{and}\quad a_{23}=b_{21}=0.$$
The equality $\phi(e_3) \wedge e_2 = e_3 \wedge \psi(e_2)$ imply that
$$a_{33}=b_{22} \quad \mbox{and}\quad a_{13}=b_{12}=0.$$
By setting $a_{ii} = b_{ii} = \mu$, the proof follows from end. 
\end{proof}
We now state our main result of this section as follows.
\begin{theorem}\label{thmb}
 Let $\mathbb{K}_{\lambda_1,\lambda_2, \lambda_3}$ be  the 3-parameter generalized quaternion algebra. Then $\delta$ is a biderivation if and only if there is a real number $\mu$ such that
 $$\delta(x,y)=\mu \tilde{x}\wedge\tilde{y}, \quad \forall x,y \in \mathbb{K}_{\lambda_1,\lambda_2, \lambda_3}. $$
\end{theorem}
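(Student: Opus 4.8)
The plan is to read the theorem off Lemmas \ref{lem1}, \ref{lem2} and \ref{lem3}, together with one elementary remark about the unit $e_0$.

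\textbf{Sufficiency.} Fix $\mu\in\mathbb{R}$ and set $\delta(x,y)=\mu\,\tilde x\wedge\tilde y$. For fixed $x$, the map $y\mapsto\delta(x,y)=\mu\,ad_{\wedge,x}(y)$ is a scalar multiple of $ad_{\wedge,x}$, hence a derivation by Lemma \ref{lem1}; since $\tilde x\wedge\tilde y=-\,\tilde y\wedge\tilde x$, for fixed $y$ the map $x\mapsto\delta(x,y)=-\,\mu\,ad_{\wedge,y}(x)$ is likewise a scalar multiple of a derivation. So both Leibniz identities of Definition \ref{def5} hold and $\delta$ is a biderivation; this in particular confirms that the example displayed after Definition \ref{def5} is a biderivation.

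\textbf{Necessity.} Let $\delta$ be an arbitrary biderivation. Lemma \ref{lem2} gives linear maps $\phi,\psi:\mathbb{K}_{\lambda_1,\lambda_2,\lambda_3}\to\mathrm{span}\{e_1,e_2,e_3\}$ with $\delta(x,y)=\phi(x)\wedge\tilde y=\tilde x\wedge\psi(y)$ for all $x,y$, and Lemma \ref{lem3} yields a scalar $\mu$ with $\phi(e_i)=\psi(e_i)=\mu e_i$ for $i=1,2,3$. The only thing left is the value $\phi(e_0)$. Since $\tilde{e_0}=0$, the relation $\phi(e_0)\wedge\tilde y=\tilde{e_0}\wedge\psi(y)=0$ holds for every $y$, and the nondegeneracy of $\wedge$ already used in the proof of Lemma \ref{lem2} (i.e.\ $ad_{\wedge,\tilde z}=0\Rightarrow\tilde z=0$) forces $\phi(e_0)=0$; equivalently, substituting $x=y=e_0$ in $\delta(xy,z)=x\delta(y,z)+\delta(x,z)y$ and using that $e_0$ is a two-sided unit gives $\delta(e_0,z)=2\delta(e_0,z)=0$. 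Consequently, for $x=x_0e_0+x_1e_1+x_2e_2+x_3e_3$, linearity of $\phi$ gives $\phi(x)=x_0\phi(e_0)+\sum_{i=1}^3x_i\phi(e_i)=\mu\,\tilde x$, whence $\delta(x,y)=\phi(x)\wedge\tilde y=\mu\,\tilde x\wedge\tilde y$ for all $x,y$.

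Essentially all the content lives in the three lemmas, so I expect no genuine obstacle; the only step they do not supply is $\phi(e_0)=0$, handled by the short unit-element computation above, and the only points requiring attention are the two Leibniz slots in the sufficiency part and the sign coming from the antisymmetry $\tilde x\wedge\tilde y=-\,\tilde y\wedge\tilde x$.
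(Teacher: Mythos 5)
Your proposal is correct and follows essentially the same route as the paper: the paper's proof likewise reduces everything to Lemmas \ref{lem1}--\ref{lem3}, notes $\delta(e_0,x)=\delta(x,e_0)=0$, and concludes by bilinearity that $\delta(x,y)=\sum_{i=1}^3 x_i\phi(e_i)\wedge\tilde y=\mu\,\tilde x\wedge\tilde y$. The only difference is that you spell out the sufficiency check and the justification of $\phi(e_0)=0$ (via the unit identity $\delta(e_0,z)=2\delta(e_0,z)$), which the paper leaves implicit.
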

\begin{proof}
    The "if" direction is obvious. We now prove the "onlyif" direction. We first see that $\delta(e_0,x)=\delta(x,e_0)=\delta_x(e_0)=0$ for all $x\in \mathbb{K}_{\lambda_1,\lambda_2, \lambda_3}$. By Lemmas \ref{lem1}, \ref{lem2} and Lemma \ref{lem3} there exists $\mu \in\mathbb{R}$ such that 
    $$\delta(x,y)=\delta(\sum_{i=0}^3x_ie_i,y)=\sum_{i=1}^3x_i\phi(e_i)\wedge\tilde{y}=\sum_{i=1}^3\mu x_ie_i\wedge\tilde{y}=\mu \tilde{x}\wedge\tilde{y}.$$
\end{proof}

Now, we go to characterize all biderivation on the case $\lambda_3=0$.\\
Let $\delta$ be a  biderivation on  3-parameter generalized quaternion $\mathbb{K}_{\lambda_1,\lambda_2, 0}$  and $x,y \in \mathbb{K}_{\lambda_1,\lambda_2, 0}$, such that $x=\sum_{i=0}^{3}x_{i}e_{i}$ and $y=\sum_{i=0}^{3} y_{i}e_{i}$. Then, by the bilinearity of $\delta$, we obtain, 
$$\delta(x,y)=\sum_{i=0}^{3} \sum_{j=0}^{3} x_{i} y_{j} \delta(e_i,e_j)=
\sum_{i=0}^{3} \sum_{j=0}^{3} x_{i} y_{j} \delta_{e_i}(e_j).$$ 
By Theorem \ref{thm1}, the matrix $D_{e_i}$ of $\delta_{e_i}$, for $i=0,1,2,3$ in the basis $\lbrace e_0,e_1,e_2,e_3 \rbrace$ is of the form
\begin{equation}
\label{eq2}
D_{e_i}= \begin{pmatrix}
		0 & 0 &0 &0\\
		0&0&0 & 0 \\
		0 & a_i & d_i & -\frac{\lambda_2}{\lambda_1} c_i \\
		0 & b_i & c_i & d_i  
		\end{pmatrix}. 
\end{equation}
\textbf{cases:1} $\delta$ is skew-symmetric.\\
From the equalities $\delta(e_i,e_i)=0$ for $i=1,2,3$, we have 
$$a_1=b_1=d_2=c_2=c_3=d_3=0.$$
From the equalities $\delta(e_0,e_i)=-\delta(e_i,e_0)=0$ for $i=1,2,3$, we have 
$$a_0=b_0=c_0=d_0=0.$$
From the equalities $\delta(e_1,e_i)=-\delta(e_i,e_1)$ for $i=2,3$, we get 
$$d_1=-a_2, \quad c_1=-b_2,\quad \frac{\lambda_2}{\lambda_1}c_1=a_3, \quad d_1=-b_3.$$
From the equalities $\delta(e_1,e_i)=-\delta(e_i,e_1)$ for $i=2,3$, we obtain
$$\frac{\lambda_2}{\lambda_1}c_2=d_3, \quad d_2=-c_3.$$
Therefore, by setting $a=c_1$ and $b=d_1$ we deduce that
\begin{eqnarray*}
    &&D_{e_0}= \begin{pmatrix}
		0 & 0 &0 &0\\
		0&0&0 & 0 \\
		0 & 0 & 0 & 0 \\
		0 & 0 & 0 &   
		\end{pmatrix}, \quad D_{e_1}= \begin{pmatrix}
		0 & 0 &0 &0\\
		0&0&0 & 0 \\
		0 & 0 & b & -\frac{\lambda_2}{\lambda_1} a \\
		0 & 0 & a & b  
		\end{pmatrix},\\
        &&D_{e_2}= \begin{pmatrix}
		0 & 0 &0 &0\\
		0&0&0 & 0 \\
		0 & -b & 0 & 0 \\
		0 & -a & 0 & 0  
		\end{pmatrix}, \quad D_{e_i}= \begin{pmatrix}
		0 & 0 &0 &0\\
		0&0&0 & 0 \\
		0 & \frac{\lambda_2}{\lambda_1} a& 0 & 0 \\
		0 & -b & 0 & 0  
		\end{pmatrix}
\end{eqnarray*}
Then, $\delta(x,y)=x_1D_{e_1}Y+x_2D_{e_2}Y+x_3D_{e_3}Y$, for all $x,y\in \mathbb{K}_{\lambda_1,\lambda_2, 0}$ where $Y=(y_0,y_1,y_2,y_3)^T$.\\
Therefore, we have the following theorem
\begin{theorem}
\label{thm4}
Let $\mathbb{K}_{\lambda_1,\lambda_2 , 0 }$  be a 3-parameter generalized quaternion. Then, $\delta^-$ is a skew-symmetric biderivation on $\mathbb{K}_{\lambda_1,\lambda_2 , 0 }$ if and only if there exist two real numbers $a,b$ such that 
$$\delta^-(x,y)=q \begin{vmatrix}
		0&\frac{\lambda_2}{\lambda_1} e_2&e_3\\
		x_1&x_2&x_3 \\
		y_1&y_2&y_3
		\end{vmatrix}+b\begin{vmatrix}
		0&- e_3&e_2\\
		x_1&x_2&x_3 \\
		y_1&y_2&y_3
		\end{vmatrix}, \hspace{0.3 cm}\forall x=\sum_{i=0}^3x_ie_i,\hspace{0.2 cm} y=\sum_{i=0}^3y_ie_i \in \mathbb{K}_{\lambda_1,\lambda_2 , 0 }.$$
\end{theorem}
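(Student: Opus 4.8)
The plan is to leverage bilinearity to reduce $\delta$ to its values on the basis, and then to exploit the classification of derivations (Theorem~\ref{thm1}) in the degenerate form \eqref{eq2} together with the skew-symmetry hypothesis. Since $\delta$ is bilinear, $\delta(x,y)=\sum_{i,j}x_iy_j\,\delta(e_i,e_j)$, so $\delta$ is entirely determined by the sixteen quaternions $\delta(e_i,e_j)$. For each fixed $i$, the partial map $\delta_{e_i}\colon y\mapsto\delta(e_i,y)$ is a derivation of $\mathbb{K}_{\lambda_1,\lambda_2,0}$, hence its matrix $D_{e_i}$ in the basis $\{e_0,e_1,e_2,e_3\}$ has the shape \eqref{eq2} with four real parameters $a_i,b_i,c_i,d_i$; thus at the outset $\delta$ depends on at most sixteen scalars, and the task is to show skew-symmetry forces these to collapse to two.

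First I would extract the diagonal constraints. Skew-symmetry gives $\delta(e_i,e_i)=0$ for $i=1,2,3$; reading off the third and fourth coordinates of $D_{e_i}e_i$ from \eqref{eq2} kills $a_1=b_1=0$, $c_2=d_2=0$ and $c_3=d_3=0$. Next, the first column of \eqref{eq2} vanishes, so $\delta_{e_i}(e_0)=D_{e_i}e_0=0$ for all $i$, whence skew-symmetry yields $\delta(e_0,e_i)=-\delta(e_i,e_0)=0$; reading off $D_{e_0}e_i$ for $i=1,2,3$ then gives $a_0=b_0=c_0=d_0=0$, i.e.\ $D_{e_0}=0$. The core of the argument is the three off-diagonal identities $\delta(e_i,e_j)=-\delta(e_j,e_i)$ for $(i,j)\in\{(1,2),(1,3),(2,3)\}$: comparing the coordinates of $D_{e_i}e_j$ and $D_{e_j}e_i$ produces a small linear system whose only free unknowns turn out to be $a:=c_1$ and $b:=d_1$, with $a_2=-b$, $b_2=-a$, $a_3=\tfrac{\lambda_2}{\lambda_1}a$, $b_3=-b$ (here $\lambda_1\lambda_2\ne0$ is used, both to invoke Theorem~\ref{thm1} and to solve $\tfrac{\lambda_2}{\lambda_1}c_1=a_3$), while the pair $(2,3)$ becomes vacuous given the earlier vanishings.

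Having the explicit matrices $D_{e_0}=0$, $D_{e_1}$, $D_{e_2}$, $D_{e_3}$ in terms of $a$ and $b$, the final step is bookkeeping: expand $\delta(x,y)=\sum_{i=1}^{3}x_i\,D_{e_i}Y$ with $Y=(y_0,y_1,y_2,y_3)^{T}$, collect the $e_2$- and $e_3$-components, and recognise the outcome as the claimed combination of $3\times3$ determinants --- the coefficient of $a$ being the determinant with first row $(0,\tfrac{\lambda_2}{\lambda_1}e_2,e_3)$ bordered by $(x_1,x_2,x_3)$ and $(y_1,y_2,y_3)$ (which is exactly $\tilde{x}\wedge\tilde{y}$ when $\lambda_3=0$), and the coefficient of $b$ the determinant with first row $(0,-e_3,e_2)$ similarly bordered. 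For the converse ("if") direction I would check directly --- either from the multiplication table with $\lambda_3=0$ or, as in Lemma~\ref{lem1}, by writing the associated matrices and matching them with \eqref{eq2} --- that each of these two determinant-valued maps is bilinear, skew-symmetric, and satisfies the two Leibniz identities of Definition~\ref{def5}.

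I expect the only real obstacle to be organizational: verifying that the various scalar equations coming from skew-symmetry are mutually consistent and collapse to precisely the two-parameter family, with no hidden relation between $a$ and $b$ being forced. The one genuinely multiplicative (rather than purely index-chasing) computation is the "if" direction, and even there the work is routine once the two basic skew products are written out in coordinates.
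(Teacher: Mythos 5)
Your proposal is correct and follows essentially the same route as the paper: reduce $\delta$ to the maps $\delta_{e_i}$ with matrices of the form \eqref{eq2}, use $\delta(e_i,e_i)=0$ and $\delta(e_0,e_i)=-\delta(e_i,e_0)=0$ to kill the diagonal and $e_0$-parameters, extract from the $(1,2)$ and $(1,3)$ skew-symmetry relations exactly the identifications $a_2=-d_1$, $b_2=-c_1$, $a_3=\tfrac{\lambda_2}{\lambda_1}c_1$, $b_3=-d_1$ (with the $(2,3)$ relation vacuous), and then expand $\sum_i x_i D_{e_i}Y$ into the two determinants with free parameters $a=c_1$, $b=d_1$. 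Your explicit verification of the ``if'' direction is a small addition the paper leaves implicit.
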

$\textbf{cases: 2}$ $\delta$ is symmetric.\\
Similarly to the case 1, we use the fact that $\delta$ is symmetric we have
\begin{eqnarray*}
    &&D_{e_0}= \begin{pmatrix}
		0 & 0 &0 &0\\
		0&0&0 & 0 \\
		0 & 0 & 0 & 0 \\
		0 & 0 & 0 & 0  
		\end{pmatrix}, \quad D_{e_i}= \begin{pmatrix}
		0 & 0 &0 &0\\
		0&0&0 & 0 \\
		0 & a_1 & d_1 & -\frac{\lambda_2}{\lambda_1} c_1 \\
		0 & b_1 & c_1 & d_1  
		\end{pmatrix},\\
        &&D_{e_2}= \begin{pmatrix}
		0 & 0 &0 &0\\
		0&0&0 & 0 \\
		0 & d_1 & d_2 & -\frac{\lambda_2}{\lambda_1} c_2 \\
		0 & c_1 & c_2 & d_2  
		\end{pmatrix},\quad D_{e_3}= \begin{pmatrix}
		0 & 0 &0 &0\\
		0&0&0 & 0 \\
		0 & -\frac{\lambda_2}{\lambda1}c_1 & -\frac{\lambda_2}{\lambda_1}c_2 & -\frac{\lambda_2}{\lambda_1} d_2 \\
		0 & d_1 & d_2 & -\frac{\lambda_2}{\lambda_1}c_2  
		\end{pmatrix}
\end{eqnarray*}
and $\delta(x,y)=x_1D_{e_1}Y+x_2D_{e_2}Y+x_3D_{e_3}Y$, for all  for all $x,y\in \mathbb{K}_{\lambda_1,\lambda_2, 0}$.

\section{Commuting linear maps and centroid on a 3-parameter generalized quaternion}
The centroid of algebras plays an important role in understanding the structure of algebras. All scalar extensions of a simple algebra remain simple if and only if its cenroid just consists of the scalars in the base field. In particular, for finite-dimensional simple associative algebras, the centroid is critical in investigating Brauer groups and division algebras. Another area where the centroid occurs naturally is in the study of derivations of an algebra. The centroids of Lie algebras have been studied in \cite{DM}. It is well known that the centroid of a simple Lie algebra is a field, This plays an important role in the classification problem of finite dimensional extended affine Lie algebras over arbitrary field of characteristic zero \cite{BN}. In this paper we prove that the centroid of a 3-parameter generalized quaternion over  $\mathbb{R}$ is a filed.
\begin{definition}
\label{def6}
A commuting linear map of an algebra  $\mathcal{A}$  is a linear map $\phi : \mathcal{A} \rightarrow \mathcal{A}$ such that
$$ \phi(x).x=x.\phi(x), \hspace{0.3 cm} \forall x \in \mathcal{A}.$$  
\end{definition}
\begin{definition}
\label{def7}
Let $\mathbb{K}_{\lambda_1,\lambda_2, \lambda_3}$ be a 3-parameter generalized quaternion. The set 
$$\Gamma(\mathbb{K}_{\lambda_1,\lambda_2, \lambda_3})=\left\lbrace  \gamma \in End(\mathbb{K}_{\lambda_1,\lambda_2, \lambda_3}) \hspace{0.2 cm} : \hspace{0.2 cm} \gamma(x.y)=\gamma(x).y=x.\gamma(y), \hspace{0.2 cm} \forall x,y \in \mathbb{K}_{\lambda_1,\lambda_2, \lambda_3} \right\rbrace $$
is called the cenroid of $\mathbb{K}_{\lambda_1,\lambda_2, \lambda_3}$.
\end{definition}
Also we can define the set
$$ Q\Gamma(\mathbb{K}_{\lambda_1,\lambda_2, \lambda_3})=\lbrace \gamma \in End(\mathbb{K}_{\lambda_1,\lambda_2, \lambda_3}) \hspace{0.2 cm}: \hspace{0.2 cm} \gamma(x).y=x.\gamma(y), \hspace{0.2 cm} \forall x,y \in \mathbb{K}_{\lambda_1,\lambda_2, \lambda_3} \rbrace,$$
which is called the quasi-centroid of a 3-parameter generalized quaternion $\mathbb{K}_{\lambda_1,\lambda_2, \lambda_3}$.\\
 It is a simple observation to see that $\Gamma(\mathbb{K}_{\lambda_1,\lambda_2, \lambda_3})$ is closed under composition and thus has Lie algebra structure. Hence, we can consider $\Gamma(\mathbb{K}_{\lambda_1,\lambda_2, \lambda_3})$ as Lie subalgebra of $End(\mathbb{K}_{\lambda_1,\lambda_2, \lambda_3})$.
\begin{lemma}
\label{lem1}
Let $\mathbb{K}_{\lambda_1,\lambda_2, \lambda_3}$ be a generalized quaternion algebra and $\phi \in \Gamma(\mathbb{K}_{\lambda_1,\lambda_2, \lambda_3})$, $d \in Der(\mathbb{K}_{\lambda_1,\lambda_2, \lambda_3})$. Then \\
$i)\hspace{0.2 cm} \phi \circ d \in Der(\mathbb{K}_{\lambda_1,\lambda_2, \lambda_3}). $\\
$ii)\hspace{0.2 cm} [\phi ,d] \in \Gamma(\mathbb{K}_{\lambda_1,\lambda_2, \lambda_3}). $
\end{lemma}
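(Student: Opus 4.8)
The plan is to prove both statements by a direct formal computation, using only the two defining identities of the centroid, namely $\phi(xy)=\phi(x)y$ and $\phi(xy)=x\phi(y)$ for all $x,y$, together with the Leibniz rule $d(xy)=d(x)y+xd(y)$ and the linearity of $\phi$ and $d$. No structure theory of $\mathbb{K}_{\lambda_1,\lambda_2,\lambda_3}$ (and in particular none of Theorem~\ref{thm1}) is needed here; the argument works for the centroid of any algebra.

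For part $i)$, fix $x,y\in\mathbb{K}_{\lambda_1,\lambda_2,\lambda_3}$ and expand $(\phi\circ d)(xy)=\phi\big(d(x)y+xd(y)\big)=\phi\big(d(x)y\big)+\phi\big(xd(y)\big)$ by linearity of $\phi$. Applying the centroid identity $\phi(ab)=\phi(a)b$ to the first summand gives $\phi(d(x))y=(\phi\circ d)(x)\,y$, and applying $\phi(ab)=a\phi(b)$ to the second summand gives $x\,\phi(d(y))=x\,(\phi\circ d)(y)$. Adding these yields the Leibniz identity for $\phi\circ d$, so $\phi\circ d\in Der(\mathbb{K}_{\lambda_1,\lambda_2,\lambda_3})$.

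For part $ii)$, write $[\phi,d]=\phi\circ d-d\circ\phi$ and check the two centroid identities separately. To obtain $[\phi,d](xy)=[\phi,d](x)\,y$: first, $\phi(d(xy))=\phi(d(x)y)+\phi(xd(y))=\phi(d(x))y+\phi(x)d(y)$, using $\phi(ab)=\phi(a)b$ on each summand; second, since $\phi(xy)=\phi(x)y$, the Leibniz rule gives $d(\phi(xy))=d(\phi(x))y+\phi(x)d(y)$. Subtracting, the two $\phi(x)d(y)$ terms cancel and we are left with $\big(\phi(d(x))-d(\phi(x))\big)y=[\phi,d](x)\,y$. The identity $[\phi,d](xy)=x\,[\phi,d](y)$ is proved in the same way, now using the other centroid identity $\phi(ab)=a\phi(b)$ together with $\phi(xy)=x\phi(y)$; this time the two $d(x)\phi(y)$ terms cancel, leaving $x\big(\phi(d(y))-d(\phi(y))\big)=x\,[\phi,d](y)$. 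Hence $[\phi,d]\in\Gamma(\mathbb{K}_{\lambda_1,\lambda_2,\lambda_3})$.

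There is no genuine obstacle in this argument; the only point requiring care is bookkeeping — one must split each product occurring under $\phi$ into a sum before applying a centroid identity, and one must consistently use $\phi(ab)=\phi(a)b$ when verifying the "left" identity and $\phi(ab)=a\phi(b)$ when verifying the "right" one. It is also worth remarking that the two conclusions are genuinely different: in general $\phi\circ d$ need not lie in $\Gamma(\mathbb{K}_{\lambda_1,\lambda_2,\lambda_3})$ and $[\phi,d]$ need not be a derivation, which is precisely why both identities have to be checked by hand.
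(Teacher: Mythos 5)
Your proposal is correct and follows essentially the same route as the paper's own proof: part $i)$ by expanding $\phi(d(x)y+xd(y))$ with the two centroid identities, and part $ii)$ by the direct computation in which the cross terms $\phi(x)d(y)$ (respectively $d(x)\phi(y)$) cancel. Your remarks on which centroid identity to use for each side are a slightly more careful bookkeeping of the same argument.
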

\begin{proof}
 Let $\phi \in \Gamma(\mathbb{K}_{\lambda_1,\lambda_2, \lambda_3})$ and $d \in Der(\mathbb{K}_{\lambda_1,\lambda_2, \lambda_3})$\\
$i)$ Let $x,y \in \mathbb{K}_{\lambda_1,\lambda_2, \lambda_3}$, we have
$$\phi \circ d(x.y)=\phi(d(x).y+x.d(y))=\phi(d(x)).y+x.\phi(d(y))$$
Then $\phi \circ d \in Der(\mathbb{K}_{\lambda_1,\lambda_2, \lambda_3})$.\\
$ii)$  Let $x,y \in \mathbb{K}_{\lambda_1,\lambda_2, \lambda_3}$, we have 
$$[\phi ,d](x.y)=\phi \circ d (x.y)-d \circ \phi(x.y)=\phi(d(x).y+x.d(y))-d(\phi(x).y)$$
$$=\phi(d(x)).y+\phi(x).d(y)-d(\phi(x)).y-\phi(x).d(y)$$
$$=\phi(d(x)).y-d(\phi(x)).y=(\phi(d(x))-d(\phi(x))).y=([\phi, d](x)).y.$$
Similarly  $[\phi ,d](x.y)=x.([\phi, d](y))$. So $ [\phi ,d] \in \Gamma(\mathbb{K}_{\lambda_1,\lambda_2, \lambda_3})$. 
\end{proof} 
We state our second main result as follows.
\begin{theorem}
\label{thm6}
Let $\mathbb{K}_{\lambda_1,\lambda_2, \lambda_3}$ be a generalized quaternion algebra. Then, the matrix of any commuting linear map on $\mathbb{K}_{\lambda_1,\lambda_2, \lambda_3}$ is of the form
$$
\begin{pmatrix}
\lambda & a & b & c \\
0& \mu & 0 & 0 \\
0 &0 & \mu &0 \\
0&0&0&\mu
\end{pmatrix} \in M_4(\mathbb{R})
$$
\end{theorem}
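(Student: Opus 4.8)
The proof converts the identity $\phi(x)x = x\phi(x)$ into a condition on the ``cross product'' appearing in the commutator of $\mathbb{K}_{\lambda_1,\lambda_2,\lambda_3}$; once this is done, the matrix entries of $\phi$ are read off by comparing polynomial coefficients. \textbf{Step 1 (the commutator).} Expanding the multiplication formula of Section 1 for $x = x_0 + \tilde x$ and $y = y_0 + \tilde y$ gives
$$xy - yx = 2\lambda_3(x_2y_3 - x_3y_2)e_1 + 2\lambda_2(x_3y_1 - x_1y_3)e_2 + 2\lambda_1(x_1y_2 - x_2y_1)e_3 .$$
Write $\tilde x\wedge\tilde y$ for one half of the right-hand side (this is, up to a nonzero scalar, the product of Section 3). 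Thus $[x,y]$ depends only on the vector parts $\tilde x,\tilde y$ and lies in $\mathrm{span}\{e_1,e_2,e_3\}$, so $\phi$ is commuting if and only if $\widetilde{\phi(x)}\wedge\tilde x = 0$ for every $x$. I will also use the nondegeneracy: since $\lambda_1\lambda_2\neq0$, the relation $\tilde z\wedge\tilde y = 0$ for all $\tilde y$ forces $\tilde z = 0$, which one checks by taking $\tilde y = e_1$ and $\tilde y = e_2$.

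\textbf{Step 2 (the $e_0$-column).} Let $\Phi = (\phi_{ij})_{0\le i,j\le3}$ be the matrix of $\phi$ and put $A = (\phi_{ij})_{1\le i,j\le3}$. Since $\widetilde{\phi(x)} = x_0\,\widetilde{\phi(e_0)} + A\tilde x$, the commuting condition becomes
$$x_0\big(\widetilde{\phi(e_0)}\wedge\tilde x\big) + (A\tilde x)\wedge\tilde x = 0 \qquad \text{for all } x_0\in\mathbb{R}\text{ and all }\tilde x .$$
Reading this as an affine function of $x_0$ with $\tilde x$ fixed, its linear and constant parts vanish separately: $\widetilde{\phi(e_0)}\wedge\tilde x = 0$ for all $\tilde x$, whence $\widetilde{\phi(e_0)} = 0$ by the nondegeneracy of Step 1, and $(A\tilde x)\wedge\tilde x = 0$ for all $\tilde x$. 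The first conclusion makes the last three entries of the first column of $\Phi$ vanish. Observe also that the scalar-part entries $\phi_{00},\phi_{01},\phi_{02},\phi_{03}$ never enter $\wedge$, hence are entirely unconstrained; these are exactly $\lambda, a, b, c$.

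\textbf{Step 3 (the $3\times3$ block and the converse).} It remains to show that $(A\tilde x)\wedge\tilde x = 0$ for all $\tilde x\in\mathrm{span}\{e_1,e_2,e_3\}$ forces $A = \mu I_3$. Writing $A\tilde x = u_1e_1 + u_2e_2 + u_3e_3$ with $u_i = \sum_{j=1}^{3}\phi_{ij}x_j$, the vanishing of the three coordinates of $(A\tilde x)\wedge\tilde x$ amounts to the polynomial identities $u_1x_2 - u_2x_1 = u_2x_3 - u_3x_2 = u_3x_1 - u_1x_3 = 0$ in $\mathbb{R}[x_1,x_2,x_3]$. Matching the coefficients of the monomials $x_ix_j$ ($1\le i\le j\le 3$) forces all off-diagonal entries of $A$ to vanish and $\phi_{11}=\phi_{22}=\phi_{33}$, which I call $\mu$. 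Combining Steps 2 and 3 yields the asserted matrix form; the converse is immediate, since for such a matrix $\widetilde{\phi(x)} = \mu\tilde x$ and hence $\widetilde{\phi(x)}\wedge\tilde x = \mu(\tilde x\wedge\tilde x) = 0$.

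The whole argument is routine linear algebra and no step is a genuine obstacle; the one place that demands a little care is Step 2, where one must first split off the $x_0$-dependence — substituting $x = e_0$ directly into $\widetilde{\phi(x)}\wedge\tilde x = 0$ is vacuous — and then invoke nondegeneracy of the cross product, which is where the hypothesis $\lambda_1\lambda_2\neq0$ is used.
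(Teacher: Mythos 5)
Your proof is correct, but it takes a genuinely different route from the paper's. The paper proves Theorem \ref{thm6} by feeding the commuting map into the biderivation machinery: it sets $\delta(x,y)=\widetilde{\phi(x)}\wedge\tilde{y}$, asserts that this is a biderivation, and invokes Theorem \ref{thmb} to conclude $\widetilde{\phi(x)}=\mu\tilde{x}$. That argument is short given the apparatus of Section 3, and it illustrates the Bre\v{s}ar-style link between commuting maps and biderivations, but it silently relies on the polarization identity $[\phi(x),y]=[x,\phi(y)]$ to justify the biderivation claim, and it inherits from Lemmas \ref{lem1}--\ref{lem3} the hypothesis $\lambda_3\neq 0$. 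Your argument is instead a self-contained coefficient comparison: you reduce the commuting condition to $\widetilde{\phi(x)}\wedge\tilde{x}=0$, correctly separate the $x_0$-dependence before specializing (noting that plugging in $x=e_0$ directly is vacuous), kill the first column via nondegeneracy of $\wedge$, and force $A=\mu I_3$ by matching monomials. This buys elementarity and, more substantively, broader validity: only the $e_2$- and $e_3$-components of $(A\tilde{x})\wedge\tilde{x}$ are needed in your Step 3, and those use only $\lambda_1\lambda_2\neq0$, so your proof covers $\lambda_3=0$ as well --- a case the paper's route does not obviously reach. One small overstatement: you claim all three identities $u_1x_2-u_2x_1=u_2x_3-u_3x_2=u_3x_1-u_1x_3=0$ follow, but when $\lambda_3=0$ the $e_1$-component gives no information; fortunately the remaining two identities already yield $\phi_{ij}=0$ for $i\neq j$ and $\phi_{11}=\phi_{22}=\phi_{33}$, so the conclusion stands.
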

\begin{proof}
Let $\phi$ be a commuting linear map of $\mathbb{K}_{\lambda_1,\lambda_2, \lambda_3}$, and $M=(a_{ij})_{0\leq i,j \leq 3}$ its matrix  in the basis $\lbrace e_0,e_1,e_2,e_3 \rbrace$. \\
We define the bilinear map $\delta:\mathbb{K}_{\lambda_1,\lambda_2, \lambda_3}\times \mathbb{K}_{\lambda_1,\lambda_2, \lambda_3} \to \mathbb{K}_{\lambda_1,\lambda_2, \lambda_3}$ by $\delta(x,y)=\tilde{\phi(x)}\wedge\tilde{y}$ it is easy to see that $\delta$ is a biderivation on $\mathbb{K}_{\lambda_1,\lambda_2, \lambda_3}$. Then, by Theorem \ref{thmb}, there exists a real numbre $\mu$ such that $\delta(x,y)=\mu \tilde{x}\wedge \tilde{y}$. That is $\tilde{\phi(x)}=\mu\tilde{x}$. Therefore, the matrix of has the form
$$M=\begin{pmatrix}
\lambda & a & b & c \\
0& \mu & 0 & 0 \\
0 &0 & \mu &0 \\
0&0&0&\mu
\end{pmatrix} \in M_3(\mathbb{R}),
$$ where $\lambda =a_{00},$ $a=a_{01}$, $b=a_{02}$, $c=a_{03}$ and $\mu=a_{11}=a_{22}=a_{33}$.
\end{proof}
\begin{theorem}
\label{thm7}
Let $\mathbb{K}_{\lambda_1,\lambda_2, \lambda_3}$ be a generalized quaternion algebra. Then, 
$$Q\Gamma(\mathbb{K}_{\lambda_1,\lambda_2, \lambda_3})=\left\lbrace \lambda id \hspace{0.3 cm}: \hspace{0.2 cm} \lambda \in \mathbb{R} \right\rbrace . $$
\end{theorem}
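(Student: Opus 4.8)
The plan is to exploit the presence of the two-sided identity $e_0$ together with the description $Z(\mathbb{K}_{\lambda_1,\lambda_2,\lambda_3})=\mathbb{R}e_0$ recorded in Section~1. First I would dispose of the easy inclusion $\{\lambda\,\mathrm{id}:\lambda\in\mathbb{R}\}\subseteq Q\Gamma(\mathbb{K}_{\lambda_1,\lambda_2,\lambda_3})$: for $\gamma=\lambda\,\mathrm{id}$ one has $\gamma(x)\cdot y=\lambda(x\cdot y)=x\cdot\gamma(y)$ for all $x,y$, so $\gamma$ lies in the quasi-centroid.

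For the reverse inclusion, let $\gamma\in Q\Gamma(\mathbb{K}_{\lambda_1,\lambda_2,\lambda_3})$ and put $a:=\gamma(e_0)$. Specializing the defining identity $\gamma(x)\cdot y=x\cdot\gamma(y)$ to $y=e_0$ gives $\gamma(x)=\gamma(x)\cdot e_0=x\cdot\gamma(e_0)=x\cdot a$, so $\gamma$ is right multiplication by $a$; specializing instead to $x=e_0$ gives $\gamma(y)=e_0\cdot\gamma(y)=\gamma(e_0)\cdot y=a\cdot y$, so $\gamma$ is also left multiplication by $a$. Comparing the two descriptions yields $a\cdot y=y\cdot a$ for every $y\in\mathbb{K}_{\lambda_1,\lambda_2,\lambda_3}$, i.e. $a\in Z(\mathbb{K}_{\lambda_1,\lambda_2,\lambda_3})=\mathbb{R}e_0$. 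Writing $a=\lambda e_0$ with $\lambda\in\mathbb{R}$, we obtain $\gamma(x)=x\cdot(\lambda e_0)=\lambda x$ for all $x$, that is, $\gamma=\lambda\,\mathrm{id}$, which closes the argument.

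There is no serious obstacle here; the only points to keep in mind are that $e_0$ is a genuine two-sided identity (so both specializations $x=e_0$ and $y=e_0$ are legitimate) and that the center equals exactly $\mathbb{R}e_0$, both of which are established in the Preliminaries, and neither of which needs the hypothesis $\lambda_3\neq0$. One could alternatively argue at the level of the matrix $M=(a_{ij})_{0\le i,j\le 3}$ of $\gamma$, testing the identity on all pairs $(e_i,e_j)$ and using the multiplication table, but this forces a case split on the values of $\lambda_1,\lambda_2,\lambda_3$ and is strictly less transparent than the coordinate-free computation above.
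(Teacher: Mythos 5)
Your proof is correct, and it takes a genuinely different and more elementary route than the paper. The paper first observes that any $\gamma\in Q\Gamma(\mathbb{K}_{\lambda_1,\lambda_2,\lambda_3})$ is in particular a commuting linear map, invokes Theorem \ref{thm6} to pin down its matrix as upper-triangular with scalar block $\mu$ on the imaginary part, and only then specializes the identity to the pairs $(e_i,e_0)$ to force $a=b=c=0$ and $\mu=\lambda$; since Theorem \ref{thm6} rests on the biderivation classification (Theorem \ref{thmb}), the paper's argument implicitly carries the hypothesis $\lambda_3\neq 0$ and the whole weight of Section 3. You instead specialize the defining identity $\gamma(x)\cdot y=x\cdot\gamma(y)$ directly at $y=e_0$ and at $x=e_0$, conclude that $\gamma$ is simultaneously left and right multiplication by $a=\gamma(e_0)$, and hence that $a$ is central, which gives $\gamma=\lambda\,\mathrm{id}$ in three lines with no matrices and no appeal to the earlier theorems. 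What your approach buys is independence from the biderivation machinery and a cleaner logical structure; what the paper's approach buys is a tighter narrative link between the quasi-centroid and the commuting-map classification of the same section. One small caveat: your parenthetical claim that neither ingredient needs $\lambda_3\neq0$ should be tempered, since the identification $Z(\mathbb{K}_{\lambda_1,\lambda_2,\lambda_3})=\mathbb{R}e_0$ is itself only valid when enough of the parameters are nonzero (it fails, for instance, when two of the $\lambda_i$ vanish, where the algebra acquires extra central elements); under the paper's standing assumptions ($\lambda_1\lambda_2\neq0$) this is not an issue.
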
 
\begin{proof}
Let $\phi  \in Q\Gamma(\mathbb{K}_{\lambda_1,\lambda_2, \lambda_3})$, then $\phi$ is a commuting linear map, so the matrix of $\phi$ is given by Theorem \ref{thm6}. Since $\phi(x).y=x.\phi(y)$, xe have that,\\
The equality $\phi(e_1).e_0 =e_1.\phi(e_0)$  implies that 
$$ a=0 \hspace{0.1 cm} and \hspace{0.1 cm} \mu = \lambda .$$
The equality $\phi(e_2).e_0 =e_2.\phi(e_0)$ implies that
 $$ b=0 \hspace{0.1 cm} and \hspace{0.1 cm} \mu = \lambda.$$
The equality $\phi(e_3).e_0 =e_3.\phi(e_0)$ implies that
$$ c=0 \hspace{0.1 cm} and \hspace{0.1 cm} \mu = \lambda .$$
Finally $\phi(e_i)=\lambda e_i$ for $i=0,1,2,3$.
\end{proof}
We now state our last result as follows.
\begin{theorem}
\label{thm8}
Let $\mathbb{K}_{\lambda_1,\lambda_2, \lambda_3}$ be a generalized quaternion algebra. Then, 
$$\Gamma(\mathbb{K}_{\lambda_1,\lambda_2, \lambda_3})=\left\lbrace \lambda id \hspace{0.3 cm}: \hspace{0.2 cm} \lambda \in \mathbb{R} \right\rbrace . $$
\end{theorem}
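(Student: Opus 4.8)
The plan is to obtain Theorem~\ref{thm8} as an immediate consequence of Theorem~\ref{thm7} via a two-inclusion argument. First I would note the trivial inclusion $\Gamma(\mathbb{K}_{\lambda_1,\lambda_2,\lambda_3})\subseteq Q\Gamma(\mathbb{K}_{\lambda_1,\lambda_2,\lambda_3})$: by Definition~\ref{def7}, any $\gamma$ in the centroid satisfies $\gamma(x.y)=\gamma(x).y=x.\gamma(y)$ for all $x,y$, and in particular $\gamma(x).y=x.\gamma(y)$, which is precisely the condition defining the quasi-centroid. Theorem~\ref{thm7} then gives $\Gamma(\mathbb{K}_{\lambda_1,\lambda_2,\lambda_3})\subseteq Q\Gamma(\mathbb{K}_{\lambda_1,\lambda_2,\lambda_3})=\{\lambda\,\mathrm{id}:\lambda\in\mathbb{R}\}$.

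For the reverse inclusion it suffices to verify that every scalar map $\lambda\,\mathrm{id}$ belongs to the centroid, which follows at once from bilinearity of the product: $(\lambda\,\mathrm{id})(x.y)=\lambda(x.y)=(\lambda x).y=x.(\lambda y)$. Hence $\Gamma(\mathbb{K}_{\lambda_1,\lambda_2,\lambda_3})=\{\lambda\,\mathrm{id}:\lambda\in\mathbb{R}\}$. Finally, the map $\lambda\mapsto\lambda\,\mathrm{id}$ is an $\mathbb{R}$-algebra isomorphism of $\mathbb{R}$ onto $\Gamma(\mathbb{K}_{\lambda_1,\lambda_2,\lambda_3})$, so the centroid is a field, matching the statement announced in the introduction.

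There is essentially no obstacle at this stage; all the substance has already been absorbed into Theorem~\ref{thm7} (which itself rests on Theorem~\ref{thm6} and the biderivation classification Theorem~\ref{thmb}). If one prefers a self-contained proof avoiding Theorem~\ref{thm7}, I would argue directly: for $\gamma\in\Gamma(\mathbb{K}_{\lambda_1,\lambda_2,\lambda_3})$, putting $y=e_0$ in $\gamma(x).y=x.\gamma(y)$ yields $\gamma(x)=x.\gamma(e_0)$, and putting $x=e_0$ yields $\gamma(y)=\gamma(e_0).y$; comparing these, $x.\gamma(e_0)=\gamma(e_0).x$ for all $x$, so $\gamma(e_0)\in Z(\mathbb{K}_{\lambda_1,\lambda_2,\lambda_3})=\mathbb{R}e_0$ by the preliminaries, say $\gamma(e_0)=\lambda e_0$, whence $\gamma(x)=\lambda x$. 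Either way the computations are routine.
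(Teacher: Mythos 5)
Your proposal is correct and follows essentially the same route as the paper: the trivial inclusion $\Gamma(\mathbb{K}_{\lambda_1,\lambda_2,\lambda_3})\subseteq Q\Gamma(\mathbb{K}_{\lambda_1,\lambda_2,\lambda_3})$ combined with Theorem~\ref{thm7}, plus the observation that scalar maps lie in the centroid. Your version is in fact cleaner, since the paper's phrasing conflates the two inclusions, and your optional self-contained argument via $y=e_0$ and the center is a valid (and more elementary) alternative.
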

\begin{proof}
It is clear that the scalar map belongs in $\Gamma(\mathbb{K}_{\lambda_1,\lambda_2, \lambda_3})$, therefore $\Gamma(\mathbb{K}_{\lambda_1,\lambda_2, \lambda_3}) \subseteq Q\Gamma(\mathbb{K}_{\lambda_1,\lambda_2, \lambda_3})$. then, the result follows from Theorem \ref{thm7}. 
\end{proof}
\vspace{1cm}
\noindent \textbf{Acknowledgements:} The authors thank the referees for their valuable comments that contributed to a sensible improvement of the paper.

\section*{Declarations}
\noindent \textbf{Conflicts of interest:} The authors declare that they have no conflict of interest.\\
\noindent \textbf{Funding:} Not applicable.\\
\noindent \textbf{Ethical approval:} Not applicable.\\
\noindent \textbf{Data availibility:} Consent for publication.


\begin{thebibliography}{99}





 
\bibitem{Ayo1}
Ayupov, Sh. Kudaybergenov, K.  2-Local automorphisms on finite-dimensional Lie algebras. Linear Algebra and its Applications 507 (2016) 121–131.

\bibitem{Ayo2}
Ayupov, Sh. Kudaybergenov, K. Local derivations on finite-dimensional Lie algebras. Linear Algebra and its Applications 493 (2016) 381–398.

\bibitem{Ayo3}
Ayupov, Sh. Kudaybergenov, K. Omirov, B. Local and 2-local derivations and automorphisms on simple Leibniz algebras. Bulletin of the Malaysian Mathematical Sciences Society 43 (3) (2020) 2199–2234.

\bibitem{Ayo4}
Ayupov, Sh. Kudaybergenov, K. 2-Local derivations on finite-dimensional Lie algebras. Linear Algebra and its Applications 474 (2015) 1–11.
\bibitem{bn}
Benayadi, Said, and Hassan Oubba. "Nonassociative algebras of biderivation-type." Linear Algebra and its Applications 701 (2024): 22-60.
\bibitem{bbo}
Bajo, Ignacio, Saïd Benayadi, and Hassan Oubba. "Non-existence of Symmetric Biderivations on Finite-Dimensional Perfect Lie Algebras: I. Bajo et al." Mediterranean Journal of Mathematics 22.6 (2025): 162.
\bibitem{BN}
Benkart G. Neher E. The centroid of extended affine and root graded Lie algebras, Journal of Pure and Applied Algebra, 205(1), (2006), 117–145.

\bibitem{Bres1}
Bre\v{s}ar, M.:
 Centralizing mapping and derivations in prime rings. J. Algebra. 156, 385-394 (1993)


\bibitem{BresMar2}
Bre\v{s}ar, M., Martindale 3rd, W.S., Miers, C.R.:
 Centralizing maps in prime rings with involution. J. Algebra. 161, 342-357 (1993)
\bibitem{BresZhao}
Bre\v{s}ar, M., Zhao, K.:
 Biderivations and  commuting Linear maps on  Lie algebras. J. Lie Theory. 28, 885-900 (1018)
\bibitem{cha}
Chaker, Rachid, and Abdelkarim Boua. "Some results on generalized quaternions algebra with generalized Fibonacci quaternions." An. Stiint. Univ. Al. I. Cuza Iasi. Mat.(NS) LXIX 2 (2023): 233-246.
\bibitem{Chen}
Chen, Z.:
 Biderivations and linear commuting maps on simple generalized Witt algebras over a field. Electron. J. Linear Algebra. 31, 1-12 (2016)
 




\bibitem{Gua}
Gua. X., Liu. X., Zhao, K.:
 Biderivations of Block Lie algebras, Linear Algebra Appl. 538, 43-55 (2018)

\bibitem{Han}
Han, X., Wang, D., Xia, C.:
  Linear commuting maps and biderivations on the Lie algebra $W(a,\,b)$. J. Lie Theory 26, 777-786 (2016)



 
 




 




  
\bibitem{Leger}
Leger, F., Luks, E.M.:
 Generalized derivations of Lie algebras. J. Algebra 228, 165-203 (2000)




\bibitem{oubm}
Oubba, Hassan. "Biderivations, commuting linear maps, post-Lie algebra structure on solvable Lie algebras of maximal rank." Communications in Algebra 53.3 (2025): 1181-1190.
\bibitem{oubw}
Oubba, Hassan. "Local (2-Local) derivations and automorphisms and biderivations of complex $\omega$-Lie algebras." Le Matematiche 79.1 (2024): 135-150.
\bibitem{oubs}
Oubba, Hassan. "Local superderivation and super-biderivation on generalized quaternion algebra." arXiv preprint arXiv:2511.12555 (2025).
\bibitem{oubd}
Oubba H.  On dual quaternion algebra.  Commun. Korean Math. Soc. 2025;40:763-776.  https://doi.org/10.4134/CKMS.c240275
\bibitem{DM}
 Melville D. J. Centroids of nilpotent Lie algebras, Comm. Algebra, 20(12), (1992), 3649–
3682.







\bibitem{sen}
Şentürk, Tuncay Deniz, and Zafer Ünal. "3-parameter generalized quaternions." Computational Methods and Function Theory 22.3 (2022): 575-608.


\bibitem{Tang}
Tang, X.:
 Biderivations of finite dimensional complex simple Lie algebras. Linear and Multilinear Algebra. 66, 250-259 (2018)
\bibitem{Wang1}
Wang, D., Yu, X.:
 Biderivations and Linear commuting maps on the Schr\"{o}dinger-Virasoro Lie algebras , Comm. Algebra. 41, 2166-2173 (2013)
\bibitem{Wang2}
Wang, D., Yu, X., Chen, Z.:
 Biderivations of the parabolic subalgebra of simple Lie algebras, Comm. Algebra. 39, 4097-4104 (2011)

\end{thebibliography}
\end{document}